\renewcommand{\ref}{\hyperref}
\newtheorem{theorem}{Theorem}[section]
\newtheorem{lemma}[theorem]{Lemma}
\newtheorem{corollary}[theorem]{Corollary}
\newtheorem{proposition}[theorem]{Proposition}
\theoremstyle{definition}
\newtheorem{remark}[theorem]{Remark}
\newtheorem{remarks}[theorem]{Remarks}
\newtheorem{example}[theorem]{Example}
\theoremstyle{plain}
\def\L{{\mathcal L}}
\def\l{{\ell}}
\def\pp{\mathbb P}
\def\ff{\mathbb F}
\def\cc{\mathbb C}
\def\Fd{{{\rm F}_d}}
\def\Z{\mathbb{Z}}
\def\rc{{\rm C}}
\newcommand\restr[2]{{
  \left.\kern-\nulldelimiterspace 
  #1 
  \littletaller 
  \right|_{#2} 
  }}
\newcommand{\littletaller}{\mathchoice{\vphantom{\big|}}{}{}{}}
\newcommand{\extp}{\@ifnextchar^\@extp{\@extp^{\,}}}
\def\@extp^#1{\mathop{\bigwedge\nolimits^{\!#1}}}
\font\smallrm=cmr7
\author[
\smallrm{S.~Andria, J.~Rojas and W.~Mangueira}
]
{Sally Andria, Jacqueline Rojas and W\'allace Mangueira}
\title[Skew lines]{Maximal number of Skew lines on Fermat Surfaces}
\keywords{Skew lines, Fermat surface}
\begin{document}

\begin{abstract} It is well-known that the Fermat surface of degree $d\geq 3$ has $3d^2$ lines. However, it has not yet been established what is the maximal number of pairwise disjoint lines that it can have if $d\geq 4$. In this article we show that the maximal number of skew lines on the Fermat surface of degree $d\geq 4$ is $3d$, either $d$ even or $d$ odd distinct of 5, otherwise ($d=5$) it contains no more than 13 pairwise disjoint lines.
\end{abstract}

\maketitle

\section*{Introduction}
	\quad It is well-known that the Fermat surface of degree $d$ in the complex projective space has $3d^2$ lines for $d\geq 3$, so it is a lower bound for $\ell_d$, the  maximal number of lines  that a smooth surface of degree $d$ in $\pp^3$ can have (cf. Proposition \ref{prop:3d2lines}). In fact, since 1882 it has been know that the so called Schur's quartic contains exactly 64 lines (\cite{Schur}). And only in 1943, {\it B. Segre} proved that $\ell_4=64$ (\cite{Segre})\footnote{ Even though a gap was discovered in Segre's proof by {\it Rams\--Sch\"utt} in 2015 (\cite{RamsSchutt}), the claim is still correct.}, but $\ell_d$ remains unknown for $d \geq  5$. 
 In this regard, the articles of {\it Caporaso\--Harris\--Mazur} (\cite{CaporasoHarrisMazur}) and {\it Boissi\`ere\--Sarti} (\cite{BoissiereSarti}) exhibited lower bounds for these numbers, which leads us to infer that $3d^2$ does not provide the maximal number of lines  on a smooth  surface of  degree $d\geq 4$ in characteristic $0$. On the other hand, according the Bauer-Rams $11d^2-30d+18$ is an upper bound for the maximal number of lines  on a smooth  surface of  degree $d\geq 3$ in $\pp^3(k)$ being $k$ a field of characteristic 0 or of characteristic $p > d$ (\cite{BauerRams}).  For example, the Fermat surface, defined by the vanishing of the polynomial $x^{q+1} + y^{q+1} +
	z^{q+1 }+ w^{q+1}$ on $\pp^3(k)$ being $k$  a field extension of $\ff_{q^2}$ where $q = p^e$ for a prime $p$, contains $q^4+q^3+q+ 1$ lines, which exceeds the Bauer-Rams's upper bound and leads the authors (cf. \cite{BrosowskyEtAll} and  references there in) to conjecture that  these Fermat surfaces may provide the maximal number of lines possible on a surface of a given degree in characteristic $p > 0$.  

	Another problem related to this is to determine the maximal number, ${\frak s}_d$, of pairwise disjoint lines (or skew lines) that a smooth surface of degree $d$ can have.  In 1975, {\it Miyaoka} gave the upper bound ${\frak s}_d\leq 2d(d-2)$ if $d\geq 4$ (\cite{Miyaoka}). It is known that ${\frak s}_3=6$, ${\frak s}_4=16$ (\cite{Nikulin})  and ${\frak s}_6=48$ (\cite{FerreiraLiraRojas}).
	Some lower bounds were given by {\it  Rams} (\cite{Rams}) and {\it Boissiere\--Sarti} (\cite{BoissiereSarti}).  However, ${\frak s}_d$ remains unknown for  $d=5$ and $d\geq 7$.


To the best of our knowledge, the maximal number of pairwise disjoint lines on Fermat surfaces is not explicitly stated in the modern literature. For instance, in Rams' article (\cite{Rams}), it is mentioned: “Let us note that the Fermat surface $\Fd$, i.e., the surface with $3d^2$ lines (the largest number known so far for $d\neq 4, 6, 8, 12, 20$), contains no family of $3d$ pairwise disjoint lines” but this claim is made without proof. For Fermat surfaces over fields with characteristic $p \neq 0$, \cite{BrosowskyEtAll} provides certain bounds for $p=2,3$.

The aim of our work is to show, in an elementary and self-contained way, that the maximal number of pairwise disjoint lines on Fermat surfaces of degree $d\geq 3$ over the complex numbers is exactly $3d$ for any $d$ even and $d$ odd distinct from 3 and 5 (being such numbers 6 and 13, for $d=3,5$, respectively), according to Theorem~\ref{Teo}.

In order to do that we first  established a notation for the set of lines in $\Fd$ (see  (\ref{retasL0L1L2})), in such a way that, we obtain the  stratification 
$\L^0\,\dot\cup\,\L^1\,\dot\cup\,\L^2$ with $\#\L^i=d^2,$ for $i=0,1,2$ of these lines in $\Fd$ (cf. Proposition~\ref{prop:3d2lines}). Moreover, the relations established in Proposition~\ref{PropIntersecoesLs}, together with Proposition~\ref{PropPsiPhis}  give us enough conditions to study the intersection  between the lines on  families $\L^i$ and $\L^j$ for $i\neq j$.  Next, we check that the   maximal number of pairwise disjoint lines on the family $\L^i$  is $d$ for all $i$, which implies   that ${\frak s}(\Fd)\leq 3d$ (being ${\frak s}(\Fd)$ the maximal number of pairwise disjoint lines that $\Fd$ can have).  In fact, if $d$ is even,  then we easily get a family consisting of $3d$ pairwise disjoint lines  on $\Fd$ (cf. Proposition~\ref{Prop:dpar}), otherwise we are faced with a real/generalized `Sudoku game' to find such maximal set of pairwise disjoint lines on $\Fd$ (cf. Sections~\ref{Secd35}, \ref{Secdgeq4}). To our surprise the case $d=5$ was the only one (for $d\geq 4$) that there is no family with $3d$ skew lines.

Finally, we note that to study the maximal number of rational curves (in particular lines) which do not intersect one another on a surface is an important tool to classify surfaces in the projective space (cf. \cite{Nikulin}, \cite{Bauer} and \cite{ArmstrongPoveroSalamon}), as well as to determine all the lines on a smooth surface from some set of its skew lines (\cite{MckeanMinahanZhang}).
\section{Lines on Fermat surfaces}

Let ${\rm F}_d$ be the degree $d$ Fermat surface in the projective complex space defined as the zeros locus of $$ x^d-y^d-z^d+w^d\in \mathbb{C}[x,y,z,w].$$ 
Set $\Phi(\Fd)=\{\ell \subset \Fd \ | \ \ell \textrm{ is a line}\}$. An easy verification allows us to conclude that $\mathcal{L}^j=\{L_{k,i}^j\}\subset \Phi(\Fd)$ for $j=0,1,2$ being
\begin{equation}\label{retasL0L1L2}
L^0_{k,i} :\left\{\begin{array}{l}
y = \eta^{i} x\\
w = \eta^k z\end{array}\right.
\!\!\!\!,\,\,\,\,
L^1_{k,i} :\left\{\begin{array}{l}
x = \eta^{k+i} z\\
y = \eta^i w\end{array}\right.
\,\,\hbox{ and }\quad
L^2_{k,i} :\left\{\begin{array}{l}
x = v\eta^iw\\
y = v\eta^{k+i} z\end{array}\right.
\end{equation} 
 where $\eta$ is a primitive $d$th root of the unity, $v$ is a complex number such that $v^d=-1$ 
 and $k,i\in \{0,1,2,\ldots,d-1\}$.\footnote{Here we use the indices $i$ and $k+i$ to describe the lines on families ${\L}^1$and ${\L}^2$ instead of simply $i,k$, because this simplifies the writing of incidence relations between the lines in $\Fd$, as we will see later.} Moreover $\#(\mathcal{L}^j)=d^2$ for $j=0,1,2$.

\begin{proposition}\label{prop:3d2lines}
With the above notation  $\Phi(\Fd)=\L^0\,\dot\cup \,\L^1\,\dot\cup\, \L^2$.  Thus   $\#(\Phi(\Fd))=3d^2$.
\end{proposition}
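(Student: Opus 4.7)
The statement splits into three parts: each line in $\L^0 \cup \L^1 \cup \L^2$ lies on $\Fd$; the three families are pairwise disjoint with $d^2$ elements each; and every line of $\Fd$ belongs to one of them. The first two are direct verifications. For inclusion, note that on $L^0_{k,i}$ one has $y^d = x^d$ and $w^d = z^d$, so $x^d-y^d-z^d+w^d=0$; the checks for $\L^1$ and $\L^2$ are analogous, the latter using $v^d = -1$. Distinct indices $(k,i)$ within the same family produce different pairs of defining hyperplanes, and two lines from different families would satisfy defining equations built from incompatible coordinate pairings, so they cannot coincide.

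The heart of the argument is surjectivity. My plan is to fix a line $\ell \subset \Fd$ and exploit the hyperplane sections $\Fd \cap \{w=0\}$ and $\Fd \cap \{z=0\}$. Each of these is a smooth plane curve of degree $d$ (its partial derivatives vanish only at the origin of $\mathbb{A}^3$) and hence contains no line for $d \geq 2$; so $\ell$ meets each transversally at a single point, say $P = (a:b:c:0)$ and $Q = (a':b':0:c')$. Parametrising $\ell = \{sP + tQ : (s:t) \in \pp^1\}$ and substituting into the defining polynomial, the vanishing of each coefficient of $s^{d-j}t^j$ yields
\begin{align*}
a^d - b^d - c^d &= 0, \\
a'^d - b'^d + c'^d &= 0, \\
a^{d-j} a'^j &= b^{d-j} b'^j, \quad 1 \le j \le d-1.
\end{align*}
I would then split into cases according to which of $a, a', b, b'$ vanish. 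In the generic case $aa'bb' \neq 0$, setting $\alpha = b/a$ and $\beta = b'/a'$, taking $j = 1$ and $j = 2$ in the middle equations forces $\alpha^d = 1$ and $\beta = \alpha$. Writing $\alpha = \eta^i$, we get $y = \eta^i x$ on $\ell$, and then $\ell \subset \Fd$ gives $w^d = z^d$; since $w/z$ is a rational function on the irreducible line $\ell$ taking only $d$-th roots of unity as values, it is a constant $\eta^k$, placing $\ell$ in $\L^0$. The degenerate pattern $a = 0$ forces $b' = 0$ (else the first equation makes $P$ the zero vector), and the surviving equations combined with $v^d = -1$ identify $\ell$ with an element of $\L^2$; symmetrically $a' = 0$ forces $b = 0$ and places $\ell$ in $\L^1$. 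All other vanishing patterns collapse $P$ or $Q$ to zero.

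The main obstacle will be the bookkeeping in this last step: ensuring the case analysis is genuinely exhaustive, and that in the generic case the $d-1$ middle equations really pin down $\alpha^d = 1$ and $\beta = \alpha$. Once that is clean, matching the surviving parametrisations against the explicit forms of $L^j_{k,i}$ is routine.
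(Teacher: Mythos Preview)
Your approach is close in spirit to the paper's (parametrise and expand), but it has a genuine gap: the parametrisation $\ell=\{sP+tQ\}$ only works when $P\neq Q$, and you never address the possibility $P=Q$. This degenerate case is not exotic --- it is exactly the family $\L^0$. Indeed, on $L^0_{k,i}$ one has $w=\eta^k z$, so setting $w=0$ forces $z=0$ and the unique point of $\ell\cap\{w=0\}$ coincides with the unique point of $\ell\cap\{z=0\}$, namely $(1:\eta^i:0:0)$.

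Worse, your ``generic case'' does not capture $\L^0$ at all; it is vacuous. You correctly derive $y=\eta^i x$ and then $w^d=z^d$ on $\ell$, and correctly argue that $w/z$ must be a constant $\eta^k$. But in your coordinates $z=sc$ and $w=tc'$, so $w/z=(c'/c)(t/s)$ is non-constant unless $c=c'=0$, which forces $P=Q$ --- contradicting the setup. So under $P\neq Q$ the case $aa'bb'\neq 0$ simply cannot occur; the lines of $\L^0$ live entirely in the unexamined case $P=Q$. The fix is easy: when $P=Q=(1:\eta^i:0:0)$, choose any second point of $\ell$ (necessarily with $(z,w)\neq(0,0)$) and rerun the coefficient argument. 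The paper does essentially this but with the roles of $(x,y)$ and $(z,w)$ reversed: it splits on whether $\ell$ meets the line $Z(x,y)$, and that intersection case yields $\L^0$ directly.
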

\begin{proof} Let us consider the line  $L=Z(x,y)$ in $\pp^3$. Note that we can stratified the lines in $\Fd$ studying their intersection with the line $L$, i.e.
$$
\Phi(\Fd)= \Big\{\ell\in \Phi(\Fd) \ | \  \ell \cap L \neq \emptyset\Big\}\,\,\dot\cup\,\,\Big\{\ell\in \Phi(\Fd) \ | \  \ell \cap L =\emptyset\Big\}.
$$ Let $\ell$ be a line in $\Fd$. Have in mind that $\Fd\cap L=\Big\{[0:0:1:\eta^j]\Big\}_{j=0}^{d-1}$ where $\eta$ is a primitive $d$th root of the unity. Therefore, according to $\ell\cap L \neq \emptyset$ or $\ell\cap L = \emptyset$ we have, respectively:
\begin{itemize}
\item $\ell$ is determined by the points ${\rm p}=[0:0:1:\eta^k]\in L$ for exactly one value of $k\in\{0,\ldots,d-1\}$ (since $L\not\in\Phi(\Fd)$) and ${\rm q}=[\alpha :\beta:0:\gamma]$ with $\alpha, \beta,\gamma\in\cc$ not all zero. Thus
\begin{equation*}
\begin{array}{lcl}
 \ell\subset\Fd & \Longleftrightarrow & \alpha^dv^d-\beta^dv^d-u^d+(\eta^ku+\gamma v)^d =0\quad\forall\,[u:v]\in\pp^1.\\
  & \Longleftrightarrow &
  \left\{
\begin{array}{rl}
      \alpha^d - \beta^d+\gamma^d=0 &\\
    \eta^{k(d-j)}\gamma^j=0 & \hbox{ for }j=1,\ldots,d-1.
\end{array}
  \right.\\
  & \Longleftrightarrow &  \alpha^d-\beta^d=0\,\, (\alpha\beta\neq 0)\,\,\hbox{ and }\,\, \gamma=0.\\
  & \Longleftrightarrow & \ell=Z(w-\eta^kz,y-\eta^ix)=L^0_{k,i}\in\L^0\,\,\hbox{ with }\,\, \alpha^{-1}\beta=\eta^i.
 \end{array}
\end{equation*}
\end{itemize}
Therefore, $\L^0=\Big\{\ell\in \Phi(\Fd) \ | \  \ell \cap L \neq \emptyset\Big\}.$
\begin{itemize}
\item If $\ell\cap L=\emptyset$, then we can assume that $\ell$ is defined by 
$$
x-\alpha z-\beta w\,\,\hbox{ and }\,\, y-\gamma z-\delta w \,\,\hbox{ with }\,\, \alpha\delta -\beta\gamma\neq 0.
$$ Thus,
\begin{equation}
\begin{array}{lcl}
 \ell\subset\Fd & \Longleftrightarrow & 
 
  (\alpha z+\beta w)^d - (\gamma z+ \delta w)^d+z^d-w^d=0.\\
  & \Longleftrightarrow &
  \left\{
\begin{array}{rl}\label{eq:lLvazio}
      \alpha^d -\gamma^d+1=0 &\\
      \beta^d -\delta^d-1=0 & \\
    \alpha^{d-j}\beta^j - \gamma^{d-j}\delta^j=0& \hbox{ for }j=1,\ldots,d-1.
\end{array}
  \right.
 \end{array}
\end{equation}
From (\ref{eq:lLvazio}) for $j=1,2$ (as $d\geq 3$), we get $\alpha^{d-1}\beta=\gamma^{d-1}\delta$ and $\alpha^{d-2}\beta^2=\gamma^{d-2}\delta^2$, which implies that $\gamma^{d-2}\delta(\gamma\beta-\alpha\delta)=0$. Therefore, $\gamma\delta=0$. In fact, we have 
\begin{equation*}
\left\{
\begin{array}{l}
\gamma =0 \Longrightarrow \beta =0 \Longrightarrow \ell\in \L^2.\\
\delta =0 \Longrightarrow \alpha =0 \Longrightarrow \ell\in \L^1.
\end{array}
\right.
\end{equation*}
\end{itemize}
Finally, note  that $[\eta^{k+i}:\eta^i:1:1]\in L^1_{k,i}\-- L^2_{t,j}$ for any $t,j$. Thus,  $\L^1\cap\L^2=\emptyset$.  
\end{proof}

\subsection*{Studying the intersections between the lines on $\Fd$}

 In what follows we use the notation $a\equiv_d b$ instead of $a\equiv b\, ({\rm mod}\,d)$ to indicate that $a$ is congruent to $b$ modulo $d$.

\begin{proposition}\label{PropIntersecoesLs} With the notation as in $(\ref{retasL0L1L2})$. For any  $a,b,i,j,k,t\in \{0,1,\ldots,d-1\}$ holds
\begin{enumerate}\setlength\itemsep{0.1cm}
\item[$(\rm a)$] $ L_{a,b}^0\cap L_{k,i}^s\not=\emptyset \iff \left\{
\begin{array}{ll}
 a=k \,\,\hbox{ or }\,\, b=i    & \hbox{ if }\,\, s=0,\\
  b-a\equiv_d k    &  \hbox{ if }\,\, s=1,\\
  b+a\equiv_d k    &  \hbox{ if }\,\, s=2.
\end{array}
\right.
$
\item[$(\rm b)$] $ L_{k,i}^s\cap L_{t,j}^{s_1}\not=\emptyset \iff \left\{
\begin{array}{ll}
k+i\equiv_d t+j \,\,\hbox{ or }\,\, i=j
   & \hbox{ if }\,\, s=s_1\in\{1,2\},\\
  v^2\eta^{t+2j} = \eta^{k+2i}    &  \hbox{ if }\,\, s=1,\,s_1=2.
\end{array}
\right.
$
\item[$(\rm c)$]
If $d$ is odd, then we can choose $v=-1$ and it follows that 
\begin{center}
$ L_{k,i}^1\cap L_{t,j}^2\not=\emptyset \iff k+2i  \equiv_d t+2j$. 
\end{center}
\item[$(\rm d)$]
If $d$ is even, then  
\begin{center}
$ L_{k,i}^1\cap L_{k,j}^2=\emptyset$ for all  $i,j$. 
\end{center}
\end{enumerate}
\end{proposition}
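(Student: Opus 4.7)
The proof is a systematic case analysis: each statement asks when two of the lines in (\ref{retasL0L1L2}), given as intersections of two hyperplanes, share a projective point. My plan is to substitute one pair of defining relations into the other and reduce each case to a single equality of monomials in $\eta$ (and possibly $v$), which, since $\eta$ is primitive, translates into a congruence modulo $d$.

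For part (a), take $L^0_{a,b}$ with defining equations $y = \eta^b x$ and $w = \eta^a z$. When $s = 0$, the system reduces to $(\eta^a - \eta^k)z = 0$ and $(\eta^b - \eta^i)x = 0$; case analysis on whether $x$ or $z$ vanishes produces the disjunction $a = k$ or $b = i$. For $s = 1$ or $s=2$, substitute the two defining equations of $L^s_{k,i}$ into those of $L^0_{a,b}$ and observe that $z = 0$ forces all four coordinates to vanish; hence $z \neq 0$, leaving a single $\eta$-power identity which yields the stated congruence.

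Part (b) follows the same template. When $s = s_1 \in \{1,2\}$, the two defining pairs decouple into an $\{x,z\}$-equation and a $\{y,w\}$-equation, each contributing either a congruence on exponents or the vanishing of one of the variables; a short case split gives the disjunction $k + i \equiv_d t+j$ or $i = j$. The mixed case $s = 1, s_1 = 2$ is the one real computation: eliminating $w/z$ from $x = \eta^{k+i}z = v\eta^j w$ and $y = \eta^i w = v\eta^{t+j}z$ (again $z \neq 0$ to avoid the zero point) yields $v^2 \eta^{t+2j} = \eta^{k+2i}$.

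Parts (c) and (d) are then consequences of (b). For (c), when $d$ is odd, $v = -1$ solves $v^d = -1$; then $v^2 = 1$ and the condition collapses to $k + 2i \equiv_d t + 2j$. The only genuinely nontrivial step is (d): with $t = k$, the relation from (b) reads $v^2 = \eta^{2(i-j)}$. Since $v^d = -1$, the element $v^2$ has order exactly $d$ in $\cc^*$, because $(v^2)^{d/2} = v^d = -1 \neq 1$; but when $d$ is even, $\eta^{2(i-j)}$ lies in the subgroup of $(d/2)$-th roots of unity, so the equality cannot hold for any $i,j$. I expect the only real obstacle to be tracking indices through the subcases cleanly; the order-of-unity argument in (d) is the lone conceptual ingredient.
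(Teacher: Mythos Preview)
Your proof is correct and follows the same approach as the paper, which leaves (a)--(c) as straightforward verifications from the defining equations and derives (d) from (b) by an equivalent roots-of-unity contradiction (the paper takes a square root of $v^2\eta^{2j}=\eta^{2i}$ and then the $d$-th power to reach $-1=(\pm1)^d=1$). One small point: your claim that $v^2$ has order \emph{exactly} $d$ is stronger than what $(v^2)^{d/2}=-1$ alone establishes (for $d=12$ the order could be $4$), but what you actually use---that $v^2$ is not a $(d/2)$-th root of unity while $\eta^{2(i-j)}$ is---is correct and is all that is needed.
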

\begin{proof} The statements (a) and (b) are straightforward  verification (from the definitions of  the lines $L^s_{k,i}$ in (\ref{retasL0L1L2})), and (c) follows from (b). 

Now, let us consider $d\geq 4$ even and suppose that $ L_{k,i}^1\cap L_{k,j}^2\neq\emptyset$ for some $i,j$. Thus, follows from (b) that $v^2\eta^{2j} = \eta^{2i}$, which implies that $v\eta^{j} = \pm\eta^{i}$. Here, if we  compute the $d$-th power of $v\eta^{j} = \pm\eta^{i}$, we lead to an absurd result.
\end{proof}


The results of Proposition~\ref{PropIntersecoesLs} are not novel. In fact, these intersection numbers were previously computed in (\cite{Schutt}, eq. (6) on p. 1944). We became aware of this only after completing our own calculations.

\section{Characterizing sets of skew lines on $\Fd$}

 Let ${\frak s}(X)$ be the \emph{maximal number of skew lines} in $X\subseteq \Fd$. The relations in the above proposition allow us to show that.

\begin{corollary} \label{Co:r(L)}
 ${\frak s}(\mathcal{L}^s)=d,$ for $s=0,1,2$. In particular, ${\frak s}(\Fd)\leq 3d$.
\end{corollary}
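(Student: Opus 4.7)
The strategy is to translate the disjointness condition within each family $\L^s$ into a combinatorial condition on the index pair $(k,i)\in\{0,\ldots,d-1\}^2$ via Proposition~\ref{PropIntersecoesLs}, then bound any skew subset $X\subset \L^s$ by a pigeonhole argument, and finally exhibit an explicit skew family of size $d$ to conclude equality. The bound ${\frak s}(\Fd)\leq 3d$ will then follow at once from the stratification $\Phi(\Fd)=\L^0\,\dot\cup\,\L^1\,\dot\cup\,\L^2$ of Proposition~\ref{prop:3d2lines}, since any collection of pairwise skew lines in $\Fd$ partitions into its intersections with the three strata, each contributing at most $d$ lines.

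For the upper bound when $s=0$, consider a skew subfamily $\{L^0_{k_r,i_r}\}_{r=1}^N\subset \L^0$. Proposition~\ref{PropIntersecoesLs}(a) says that disjointness of $L^0_{k_r,i_r}$ and $L^0_{k_{r'},i_{r'}}$ for $r\neq r'$ requires both $k_r\neq k_{r'}$ and $i_r\neq i_{r'}$; as the indices lie in $\{0,\ldots,d-1\}$, pigeonhole forces $N\leq d$. For $s=1$, Proposition~\ref{PropIntersecoesLs}(b) shows that any two disjoint lines $L^1_{k,i}$ and $L^1_{t,j}$ must satisfy $i\neq j$, so a skew subfamily of $\L^1$ has pairwise distinct second indices, hence at most $d$ elements. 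The case $s=2$ is handled by the same reasoning.

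For the matching lower bound, the families $\{L^0_{k,k}\}_{k=0}^{d-1}\subset\L^0$, $\{L^1_{0,i}\}_{i=0}^{d-1}\subset\L^1$ and $\{L^2_{0,i}\}_{i=0}^{d-1}\subset\L^2$ each consist of $d$ pairwise skew lines, which is immediate from Proposition~\ref{PropIntersecoesLs}: in $\L^0$ both coordinate sequences traverse all residues modulo $d$, while in $\L^1$ and $\L^2$ the second indices are already distinct and the sums $0+i=i$ are pairwise non-congruent modulo $d$. No serious obstacle appears at this stage; the argument is essentially bookkeeping on index pairs. The genuinely delicate question, addressed in later sections of the paper, is whether skew subfamilies of the maximal size $d$ can be chosen in all three strata so that their union remains skew in $\Fd$; this is where the mixed intersection rules (c) and (d) of Proposition~\ref{PropIntersecoesLs} come into play and eventually produce the exceptional behaviour at $d=5$.
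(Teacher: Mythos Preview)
Your proof is correct and follows essentially the same approach as the paper: both use Proposition~\ref{PropIntersecoesLs}(a),(b) to reduce the upper bound to a pigeonhole argument on the indices, and both exhibit the same explicit families (the diagonal $\{L^0_{k,k}\}$ and a fixed-$k$ column $\{L^s_{k,i}\}$ for $s=1,2$) to realise the bound. The concluding inequality ${\frak s}(\Fd)\leq 3d$ is drawn in the same way from the stratification of Proposition~\ref{prop:3d2lines}.
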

\begin{proof} From (a) in Proposition \ref{PropIntersecoesLs} we have that 
$L^0_{a,b}\cap L_{k,i}^0=\emptyset$ iff 
 $a\neq k$ and $b\neq i $. Thus, any subset $\rc\subset\L^0$ of pairwise disjoint lines is constituted by lines $L^0_{a,b}$, of which the indices $a$ are all distinct. Hence, ${\frak s}(\L^0)\leq d$ (since $a\in \{0,\ldots,d-1\}$). On the other hand $\{L^0_{a,a}\}_{a=0}^{d-1}$ is a family of $d$ skew lines in $\L^0$. Therefore, ${\frak s}(\mathcal{L}^0)=d$. 

 One more time, from (b) in Proposition \ref{PropIntersecoesLs} we have that 
$L^s_{k,i}\cap L_{t,j}^s=\emptyset$ iff 
 $i\neq j$ and  $k+i\not\equiv_d t+j$. Again, the condition $i\neq j$ (with $i,j\in \{0,1,...,d-1\})$ implies  that ${\frak s}(\L^s)\leq d$ for $s=1,2$. However, $\{L^s_{k,i}\}_{i=0}^{d-1}$ is constituted by $d$ skew lines in $\L^s$. Therefore, ${\frak s}(\mathcal{L}^s)=d$ for $s=1,2$.
 Finally, note that ${\frak s}(\Fd)\leq {\frak s}(\mathcal{L}^0)+{\frak s}(\mathcal{L}^1)+{\frak s}(\mathcal{L}^2)=3d$.\end{proof}

 From Corollary \ref{Co:r(L)} we have the upper bound $3d$ for ${\frak s}(\Fd)$. So we are invited to look for maximal subsets of skew lines in $\Fd$. In this regard, an important tool is the next proposition, which will establish some kind of  {\it sudoku}'s rule for our game\footnote{ The game is: given $d\geq 4$ find the maximal number of pairwise lines on Fermat surface $\Fd$.}. In fact, the lower bound $2d$ for ${\frak s}(\Fd)$ will be established in Corollary \ref{Co:lbound2d}. From this point onward, we start playing (pay attention to the rules!).
\begin{proposition}\label{PropPsiPhis}
Let $R_d=\{0,1,\ldots,d-1\}$ and  $r_d:\mathbb{Z}\longrightarrow R_d$ the remainder\footnote{If $a\in\mathbb{Z}$, then $r_d(a)=r$ where $r\in R_d$ and $a\equiv_d r$.} function by $d$. Consider the functions
$$
\begin{array}{ccccccccc}
    \psi_d: & R_d\times R_d & \longrightarrow & R_d &  \,\,\hbox{ and }\,\, &\varphi_{d,{\pm}}: & R_d\times R_d & \longrightarrow & R_d \\
    & (k,i) & \longmapsto & r_d(k+2i) & &
    & (k,i) & \longmapsto & r_d(i\pm k).
\end{array}
$$
For $u \in R_d$, $s\in\{0,1,2\}$ define
\begin{align*}
    {\rm D}_u^s&:=\big\{\mathcal{L}_{k,i}^s\in \mathcal{L}^s\ \mid \ (k,i)\in \psi_d^{-1}(u)  \big\}, \hbox{ for }s=1,2; \\
    {\rm D}_{u,\pm }^0&:=\big\{\mathcal{L}_{k,i}^s\in \mathcal{L}^0\ \mid \ (k,i)\in \varphi_{d,\pm}^{-1}(u)  \big\}.
\end{align*} 
It is verified that 
\begin{enumerate}
\item[$(\rm a)$] the restriction of $\psi_d$ and $\varphi_{d,\pm}$ to $R_d\times \{i\}$ is a bijection for all $i$;
\item[$(\rm b)$]  $\restr{\varphi_{d,\pm}}{\{k\}\times R_d } : \{k\}\times R_d \to R_d$   is a bijection for all $k$;
    \item[$(\rm c)$] $\psi_d \mid_{\{k\}\times R_d } : \{k\}\times R_d \to R_d$ is a bijection for all $k$, if $d$ is odd;
    \item[$(\rm d)$] $\#\psi_d^{-1}(u)=d$ and $\#{\varphi_{d,\pm}}^{-1}(u)=d$ for all $u \in R_d$;
     \item[$(\rm e)$]   $ {\rm D}_{u,\pm }^0\subset \L^0$ and ${\rm D}_u^s\subset\L^s$ for $s=1,2,$   are  families of $d$ skew lines.
\end{enumerate}
\end{proposition}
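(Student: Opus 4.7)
The plan is to dispatch (a)--(d) as elementary facts about the additive group $\mathbb{Z}/d\mathbb{Z}$, and then invoke Proposition~\ref{PropIntersecoesLs} to extract (e). For (a), with $i\in R_d$ fixed, the map $k\mapsto r_d(k+2i)$ is a translation by $2i$ modulo $d$, hence a bijection of $R_d$; the analogous statement for $\varphi_{d,\pm}$ is immediate. For (b), with $k$ fixed, $i\mapsto r_d(i\pm k)$ is a translation (composed with $i\mapsto -i$ in the $-$ case), which is a bijection of $\mathbb{Z}/d\mathbb{Z}$. For (c), the map $i\mapsto r_d(k+2i)$ with $k$ fixed is a bijection iff multiplication by $2$ is invertible on $\mathbb{Z}/d\mathbb{Z}$, i.e.\ iff $d$ is odd. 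Then (d) follows from (a): each fiber of $\psi_d$ or $\varphi_{d,\pm}$ meets every horizontal slice $R_d\times\{i\}$ in exactly one point, so it has cardinality $d$.

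For (e) with $s\in\{1,2\}$, I take two distinct lines $L_{k,i}^s,\,L_{t,j}^s\in{\rm D}_u^s$. The defining relation $\psi_d(k,i)=u=\psi_d(t,j)$ gives
\[
k-t\equiv_d 2(j-i).
\]
By Proposition~\ref{PropIntersecoesLs}(b) these lines meet iff $i=j$ or $k+i\equiv_d t+j$. If $i=j$, the displayed congruence forces $k\equiv_d t$ and hence $(k,i)=(t,j)$, contradicting distinctness; if $k+i\equiv_d t+j$, i.e.\ $k-t\equiv_d j-i$, combining with the display yields $j-i\equiv_d 0$, again $i=j$, the same contradiction. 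Hence the lines in ${\rm D}_u^s$ are pairwise disjoint, and by (d) there are exactly $d$ of them.

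For ${\rm D}_{u,\pm}^0\subset\L^0$ the argument is even simpler, using Proposition~\ref{PropIntersecoesLs}(a). Two distinct lines $L_{a,b}^0,\,L_{k,i}^0\in{\rm D}_{u,\pm}^0$ satisfy $b\pm a\equiv_d i\pm k$; if $a=k$ this forces $b=i$, contradicting distinctness, and symmetrically $b=i$ forces $a=k$. So neither condition of Proposition~\ref{PropIntersecoesLs}(a) is satisfied and the lines are disjoint; part (d) again gives cardinality $d$. I expect no real obstacle here: the proof is essentially a cataloging of congruence identities, with the only mild subtlety being that in (e) for $s\in\{1,2\}$ both branches of the incidence criterion collapse, under the defining equation of ${\rm D}_u^s$, to the forbidden equation $i=j$.
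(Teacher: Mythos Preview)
Your proof is correct and is precisely the kind of elementary verification the paper has in mind: the paper's own proof is simply ``It is left to the reader,'' so your argument fills in exactly what was omitted, via translations in $\mathbb{Z}/d\mathbb{Z}$ for (a)--(d) and an appeal to Proposition~\ref{PropIntersecoesLs} for (e). One trivial remark: in (b), with $k$ fixed the map $i\mapsto r_d(i-k)$ is already a pure translation by $-k$, so the parenthetical about composing with $i\mapsto -i$ is unnecessary (though harmless).
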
 
\begin{proof} It is left to the reader.\end{proof}

 In the next corollary we find the lower bound $2d$ for ${\frak s}(\Fd)$.

\begin{corollary}\label{Co:lbound2d}
 ${\frak s}(\mathcal{L}^0\cup \mathcal{L}^s)=2d$ for $s=1,2$ and ${\frak s}(\mathcal{L}^1\cup \mathcal{L}^2)=2d$. Thus $2d\leq {\frak s}(\Fd)\leq 3d$.    
\end{corollary}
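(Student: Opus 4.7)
The plan is to derive the upper bound immediately from Corollary \ref{Co:r(L)} and then exhibit, for each of the three unions, an explicit family of $2d$ pairwise disjoint lines, using Propositions \ref{PropIntersecoesLs} and \ref{PropPsiPhis} to check incidences.

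For the upper bound, any skew family $\rc\subset \mathcal{L}^i\cup\mathcal{L}^j$ decomposes as $\rc=(\rc\cap \mathcal{L}^i)\,\dot\cup\,(\rc\cap \mathcal{L}^j)$ with each piece skew, so $|\rc|\leq {\frak s}(\mathcal{L}^i)+{\frak s}(\mathcal{L}^j)=2d$ by Corollary \ref{Co:r(L)}; the same reasoning already gives ${\frak s}(\Fd)\leq 3d$.

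For the lower bound on $\mathcal{L}^0\cup \mathcal{L}^s$ with $s\in\{1,2\}$, I would fix $u\in R_d$ and $k_0\in R_d\setminus\{u\}$ and take
\[
\rc = {\rm D}^0_{u,\mp}\,\cup\, \{L^s_{k_0,i}\}_{i=0}^{d-1}
\]
(with $-$ for $s=1$ and $+$ for $s=2$). Skewness of ${\rm D}^0_{u,\mp}$ is Proposition \ref{PropPsiPhis}(e), while $\{L^s_{k_0,i}\}_i$ is skew by Proposition \ref{PropIntersecoesLs}(b) since fixing $k=k_0$ reduces the intersection test to $i\neq j$. Cross-intersections are excluded by Proposition \ref{PropIntersecoesLs}(a): every line of ${\rm D}^0_{u,\mp}$ satisfies $b\mp a\equiv_d u$, so meeting an $L^s_{k_0,i}$ would force $k_0\equiv_d u$, contradicting the choice of $k_0$.

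The case $\mathcal{L}^1\cup \mathcal{L}^2$ is where I expect most of the bookkeeping, since the intersection criterion in Proposition \ref{PropIntersecoesLs}(b) for $s=1,\,s_1=2$ involves $v^2$ and thus forces a split on the parity of $d$. For $d$ odd, Proposition \ref{PropIntersecoesLs}(c) allows the clean choice $\rc = {\rm D}^1_u\cup {\rm D}^2_{u'}$ with $u\neq u'$: each factor is skew of size $d$ by Proposition \ref{PropPsiPhis}(e), and any cross-intersection would give $u\equiv_d u'$, a contradiction. For $d$ even, Proposition \ref{PropIntersecoesLs}(d) handles cross-intersections for free, so I would take $\rc = \{L^1_{k_0,i}\}_i\cup\{L^2_{k_0,j}\}_j$ for any fixed $k_0$. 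Finally, since $\mathcal{L}^0\cup \mathcal{L}^1\subseteq \Phi(\Fd)$, any of the constructions above yields ${\frak s}(\Fd)\geq 2d$, completing the chain $2d\leq {\frak s}(\Fd)\leq 3d$.
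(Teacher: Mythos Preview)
Your proof is correct and follows essentially the same approach as the paper: derive the upper bound from Corollary~\ref{Co:r(L)}, then exhibit explicit $2d$-families using a diagonal/anti-diagonal ${\rm D}^0_{u,\mp}$ paired with a column $\{L^s_{k_0,i}\}_i$ for $\L^0\cup\L^s$, and the families ${\rm D}^s_u$ for $\L^1\cup\L^2$. The only minor difference is that the paper handles $\L^1\cup\L^2$ uniformly via Proposition~\ref{PropIntersecoesLs}(b) with the single choice ${\rm D}^1_0\cup{\rm D}^2_1$, while you split on the parity of $d$ and invoke parts (c) and (d) separately; your version is slightly more careful in that it makes no implicit assumption on the particular $d$-th root $v$.
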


\begin{proof} From Corollary \ref{Co:r(L)} we have that ${\frak s}(\mathcal{L}^0)={\frak s}(\mathcal{L}^1)={\frak s}(\mathcal{L}^2)=d$. Which implies that $${\frak s}(\mathcal{L}^0\cup \mathcal{L}^s)\leq 2d, \textrm{ for } s=1,2 \hspace{1cm} \textrm{ and } \hspace{1cm} {\frak s}(\mathcal{L}^1\cup \mathcal{L}^2)\leq 2d.$$ Thus, it is enough to find a family of $2d$ skew lines on $\mathcal{L}^0\cup \mathcal{L}^s$ for $s=1,2$ and on $\mathcal{L}^1\cup \mathcal{L}^2$, respectively. For the first statement, from  item (a) of Proposition \ref{PropIntersecoesLs}, we conclude that the  following two sets are constituted by $2d$ skew lines 
$$\{L^0_{0,0},L^0_{1,1},\ldots,L^0_{d-1,d-1},L_{1,0}^1,L_{1,1}^1,\ldots,L_{1,d-1}^1\};$$ 
$$\{L^0_{0,0},L^0_{1,d-1},L^0_{2,d-2},\ldots,L^0_{d-1,1},L_{1,0}^2,L_{1,1}^2,\ldots,L_{1,d-1}^2\}.
$$ 
Now, for the second statement, we have that  $\#{\rm D}_{0}^1=d$ and $\#{\rm D}_{1}^2=d$ in accordance with  the item (e) of Proposition \ref{PropPsiPhis}. Moreover, by item (b) of Proposition \ref{PropIntersecoesLs} we have that $L_{k,i}^1\cap L_{m,n}^2=\emptyset$ for any $L^1_{k,i}\in {\rm D}_0^1$ and $L_{m,n}^2\in {\rm D}_1^2$. Therefore, ${\rm D}_0^1\cup {\rm D}_1^2$ is a family of $2d$ skew lines in $\L^1\cup\L^2$.
\end{proof}

From now on, we will focus on capturing maximal subsets of skew lines in $\Fd$, revisiting the conditions that must be satisfied by such subsets.

\subsubsection{Rewriting conditions for subsets of skew  lines in $\Fd$}

In order to find maximal sets of skew lines in $\Fd$, we started by characterizing those subsets of skew lines in $\L^s$ for each $s=0,1,2$ in terms of $\psi_d$ and $\varphi_{d,\pm}$ (cf. Proposition \ref{PropPsiPhis}), 
when it comes to the case.



Once again, from the Proposition~\ref{PropPsiPhis} we obtain the following two corollaries.

\begin{corollary}\label{cor:Ciskew} Let $\rc\subset\Phi(\Fd)$ and define $\rc^s:=\rc\cap\L^s$ for $s\in\{0,1,2\}$. 
\begin{itemize}
\item[$({\rm a})$] $\rc^0$ is constituted by skew lines $\iff$
$\left\{\textrm{\begin{tabular}{l}
     $\rc^0=\{L^0_{a_1,b_1},\ldots, L^0_{a_m,b_m}\}$ with $\#\rc^0=m$,\\ $0\leq a_1<\cdots <a_m\leq d-1$ and
     there is\\ a permutation $\sigma$ of $R_d$ such that $\sigma (a_i)=b_i$.
\end{tabular}}\right.$

\item[$({\rm b})$] 
$\rc^1$ is constituted by skew lines $\iff$ $\left\{
\textrm{\begin{tabular}{l}
     $\rc^1=\{L^1_{a_1,b_1},\ldots, L^1_{a_m,b_m}\}$ with $\#\rc^1=m$,\\
     $0\leq b_1<\cdots <b_m\leq d-1$ and $\varphi_{d,+}$ \\restricted   to $\{(a_i,b_i)\}_{i=1}^m$ is injective.
\end{tabular}}\right.$
\item[$({\rm c})$] $\rc^2$ is constituted by skew lines $\iff$ $\left\{\textrm{\begin{tabular}{l}
     $\rc^2=\{L^2_{a_1,b_1},\ldots, L^2_{a_m,b_m}\}$ with $\#\rc^2=m$,\\
     $0\leq b_1<\cdots <b_m\leq d-1$ and $\varphi_{d,+}$ \\restricted   to $\{(a_i,b_i)\}_{i=1}^m$ is injective.
\end{tabular}}\right.$
\end{itemize}
\end{corollary}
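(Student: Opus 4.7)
The plan is to translate the pairwise-disjointness of $\mathcal{C}^s$ directly through the incidence criteria of Proposition~\ref{PropIntersecoesLs}, and then repackage the resulting conditions in the language of the auxiliary maps $\psi_d$ and $\varphi_{d,\pm}$ from Proposition~\ref{PropPsiPhis}. In each of the three items, the \emph{iff} will reduce to two simultaneous distinctness statements on the indices of the lines involved, which is precisely what the right-hand side encodes.

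For item (a), I would invoke Proposition~\ref{PropIntersecoesLs}(a) with $s=0$: two lines $L^0_{a,b}$ and $L^0_{k,i}$ are disjoint iff $a\neq k$ \emph{and} $b\neq i$. Hence $\mathcal{C}^0$ is a family of skew lines iff the list of first coordinates and the list of second coordinates are each pairwise distinct in $R_d$. Ordering the lines by their first index gives $0\le a_1<\cdots<a_m\le d-1$, and the assignment $a_i\mapsto b_i$ becomes a partial injection $\{a_1,\ldots,a_m\}\to R_d$. Any such partial injection between subsets of the finite set $R_d$ extends to a bijection $\sigma\colon R_d\to R_d$ with $\sigma(a_i)=b_i$. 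Conversely, the existence of such a permutation forces the $b_i$ to be distinct, and the ordering forces the $a_i$ to be distinct, so the skewness is recovered.

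For items (b) and (c) I would apply Proposition~\ref{PropIntersecoesLs}(b) with $s=s_1\in\{1,2\}$: the lines $L^s_{a,b}$ and $L^s_{c,e}$ are disjoint iff $b\neq e$ and $a+b\not\equiv_d c+e$. Reordering by the second index so that $b_1<\cdots<b_m$ enforces the first condition, while the second says precisely that $\varphi_{d,+}(a_i,b_i)=r_d(b_i+a_i)$ takes pairwise distinct values on $\{(a_i,b_i)\}_{i=1}^m$, i.e., that $\restr{\varphi_{d,+}}{\{(a_i,b_i)\}_{i=1}^m}$ is injective. Since the intersection pattern inside $\mathcal{L}^1$ is formally identical to the one inside $\mathcal{L}^2$, the same argument yields (b) and (c) simultaneously.

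There is no serious obstacle: the content is a direct unpacking of Proposition~\ref{PropIntersecoesLs} once one chooses the right bookkeeping, namely $\varphi_{d,+}$ for the diagonal sum condition and the ordering convention that absorbs the distinctness condition on one of the two coordinates. The only mildly delicate point is the extension of a partial injection $\{a_i\}\to R_d$ to a full permutation of $R_d$ in item (a), which is immediate because the target is a finite set of the same cardinality as $R_d$ and partial injections on finite sets of equal size always extend.
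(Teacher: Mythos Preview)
Your proposal is correct and is exactly the derivation the paper has in mind: the paper does not spell out a proof at all, merely stating that the corollary follows from Proposition~\ref{PropPsiPhis} (together with the incidence criteria in Proposition~\ref{PropIntersecoesLs}), and your argument is precisely that unpacking. The only cosmetic difference is that you emphasize Proposition~\ref{PropIntersecoesLs} as the source of the disjointness conditions while the paper cites Proposition~\ref{PropPsiPhis}, but the latter is just supplying the notation $\varphi_{d,+}$ in which you repackage those conditions.
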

\begin{remark} Note that 
$
\L^s_k=\{L^s_{k,i}\in\L^s\mid i\in R_d\}$ for $ k\in R_d $ and $ s\in\{0,1,2\}$ 
is constituted by $d$  skew lines if $s\in\{1,2\}$ (according to $({\rm b})$ in Proposition \ref{PropIntersecoesLs}). Moreover,  
$$\L^s=\L^s_0\,\,\dot\cup\,\cdots \,\dot\cup\,\L^s_{d-1}\quad\hbox{ for any }\, s\in\{ 0,1,2\}.$$
Now, we will concentrate our attention on the description of those  subsets $\rc^s$ of $\L^s$ consisting of skew lines such that $\rc^s\cup\rc^{s_1}$ is also formed by skew lines (for $0\leq s<s_1\leq 2$).
\end{remark}

In what follows, for any subset ${\rm X}\subseteq\Phi(\Fd)$ we may identify the line $\L^s_{k,i}\in {\rm X}$ with the pair $(k,i)$ (which will be clear from the context). Having this in mind we will consider  $\psi_d({\rm X})$ and $\varphi_{d,\pm}({\rm X})$.

\begin{corollary}\label{Cor:CiCjskew} With the above notation. Assume that $\rc^0, \rc^1$ and $\rc^2$ consist of skew lines. Then we have
\begin{itemize}
\item[$({\rm a})$] $\rc^0\,\cup\,\rc^1$ is constituted by skew lines $\iff$ 
 $\rc^1\cap\L^1_k=\emptyset$ for every $k\in \varphi_{d,-}(\rc^0)$.
\item[$({\rm b})$]
$\rc^0\,\cup\,\rc^2$ is constituted by skew lines $\iff$ 
 $\rc^2\cap\L^2_k=\emptyset$ for every $k\in \varphi_{d,+}(\rc^0)$.
\item[$({\rm c})$] For $d$ odd holds $\rc^1\,\cup\,\rc^2$ is constituted by skew lines $\iff$ 
     $\psi_d(\rc^1)\,\cap\, \psi_d(\rc^2)=\emptyset$.
\end{itemize}
\end{corollary}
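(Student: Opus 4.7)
My plan is to read off each of the three equivalences directly from Proposition~\ref{PropIntersecoesLs} by observing that in every case the intersection test between a line of $\rc^s$ and a line of $\rc^{s_1}$ decouples: it depends on $(a,b)$ only through a single value of $\varphi_{d,\pm}$ (or $\psi_d$) and on $(k,i)$ only through its first index (or through $\psi_d(k,i)$). This decoupling is exactly what converts a ``for all pairs'' condition into the cleaner conditions stated in (a)--(c). Since $\rc^0, \rc^1, \rc^2$ are already assumed to consist of skew lines, only the cross intersections $\rc^s$ versus $\rc^{s_1}$ ($s\neq s_1$) need to be analysed.

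For (a), I would invoke Proposition~\ref{PropIntersecoesLs}(a) with $s=1$: for $L^0_{a,b}\in\rc^0$ and $L^1_{k,i}\in\rc^1$ one has $L^0_{a,b}\cap L^1_{k,i}\neq\emptyset$ iff $b-a\equiv_d k$, a condition independent of $i$. Hence $\rc^0\cup\rc^1$ consists of skew lines iff for every $(a,b)\in\rc^0$ no line of $\rc^1$ has first index $k\equiv_d b-a$. Since $\varphi_{d,-}(a,b)=r_d(b-a)$ and $\L^1_k=\{L^1_{k,i}\mid i\in R_d\}$ collects precisely those lines of $\L^1$ with first index $k$, this is the statement $\rc^1\cap\L^1_k=\emptyset$ for every $k\in\varphi_{d,-}(\rc^0)$.

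Part (b) is strictly parallel: replacing $s=1$ by $s=2$ in Proposition~\ref{PropIntersecoesLs}(a) changes the condition to $b+a\equiv_d k$, and $\varphi_{d,-}$ is replaced by $\varphi_{d,+}(a,b)=r_d(b+a)$; the rest of the argument is identical. For (c), with $d$ odd the choice $v=-1$ is available and Proposition~\ref{PropIntersecoesLs}(c) reduces the cross condition to $k+2i\equiv_d t+2j$, i.e.\ $\psi_d(k,i)=\psi_d(t,j)$. Thus $\rc^1\cup\rc^2$ is skew iff no element of $\psi_d(\rc^1)$ coincides with an element of $\psi_d(\rc^2)$, which is the asserted disjointness of images.

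I do not anticipate a genuine obstacle: the proof is essentially a rewriting exercise. The one point that deserves care is the ``independence from the other variable'' observation in (a) and (b), since it is this independence that allows the condition to be expressed in terms of the set $\varphi_{d,\pm}(\rc^0)$ alone, rather than in terms of the full list of pairs in $\rc^0$; once this is noticed, the identification with the fibres $\L^s_k$ is immediate from the remark preceding the corollary.
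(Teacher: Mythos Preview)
Your proposal is correct and is exactly the argument the paper has in mind: the corollary is stated without proof, being declared an immediate consequence of the definitions of $\varphi_{d,\pm}$, $\psi_d$ in Proposition~\ref{PropPsiPhis} together with the intersection criteria of Proposition~\ref{PropIntersecoesLs}. Your explicit unpacking of the decoupling (the cross condition depends on $(a,b)$ only through $\varphi_{d,\pm}(a,b)$ and on the line in $\rc^s$ only through its first index, resp.\ through $\psi_d$) is precisely the content the paper leaves to the reader.
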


\begin{remarks}\label{Obs:phi=rd} Assume $d$ odd. If $\rc^s\subset\L^s$ consists of skew lines for $s=0,1,2$, then  Corollary \ref{Cor:CiCjskew} allows to conclude that
\begin{itemize}
  \item  If  $\rc^1=\L^1_k$ $($resp. $\rc^2=\L^2_k)$ for some $k\in R_d$, then $\rc^2=\emptyset$  $($resp. $\rc^1=\emptyset)$. 
  \item If $ \varphi_{d,+}(\rc^0)=R_d$ $($resp. $\varphi_{d,-}(\rc^0)=R_d$), then $\rc^2=\emptyset$ $($resp. $\rc^1=\emptyset )$\footnote{It is also true for $d$ even.}. 
  \item $\#\psi_d(\rc^1)+\#\psi_d(\rc^2)\leq d.$ In particular, if $\psi_{d}(\rc^1)=R_d$, then $\rc^2=\emptyset$ and vice versa.
\end{itemize}
\end{remarks}

Let's see an example of a family of 13 skew lines in
${\rm F}_5$.

\begin{example}\label{exampled=5} Note that $\rc^0=\{ L^0_{0,4}, L^0_{2,0}, L^0_{3,1}, L^0_{4,3}\}$ consists of four skew lines (cf. (a) in Corollary \ref{cor:Ciskew}). Furthermore, in the rows of the next table we register  the values of $\varphi_{5,\pm}(\rc^0)$, respectively.
\begin{center}
\begin{tabular}{|c||c|c|c|c|} \hline
         $\rc^0$    & $L^0_{0,4}$ & $L^0_{2,0}$  & $L^0_{3,1}$ & $L^0_{4,3}$    \\ \hline \hline
          $\varphi_{5,-}$   & 4 & 3 & 3 & 4 \\ 
          \hline
          $\varphi_{5,+}$   & 4 & 2 & 4 & 2 \\ \hline
        \end{tabular}
\end{center}

Now, having in mind  Corollary \ref{Cor:CiCjskew} for the choice of $\rc^s\subset\L^s$ such that $\rc^0\cup\rc^s$ is constituted by skew lines for $s=1,2$, it is necessary that 
$$
\rc^1\cap\L^2_k=\emptyset\quad\forall\, k\in\{3,4\}\quad\hbox{and}\quad \rc^2\cap\L^2_k=\emptyset\quad\forall\, k\in\{2,4\}.
$$ So,   $\rc^1=\{ L^1_{0,1},L^1_{0,4},L^1_{2,0},L^1_{2,3}\}$ and $\rc^2=\{ L^2_{0,0},L^2_{1,2},L^2_{3,1},L^2_{3,3},L^2_{3,4}\}$ are admissible choices. As well as according to the information on the rows in   the following two  tables.

\vspace{0.1cm}

\begin{center}
\begin{minipage}{0.4\textwidth}
\begin{tabular}{|c||c|c|c|c|} \hline
         $\rc^1$    & $L^1_{0,1}$  & $L^1_{0,4}$ & $L^1_{2,0}$ & $L^1_{2,3}$  \\ \hline \hline
          $\varphi_{5,+}$   & 1 & 4 & 2 & 0 \\
          \hline
          $\psi_5$   & 2 & 3 & 2 & 3 \\ \hline
        \end{tabular}	
\end{minipage}
\begin{minipage}{0.4\textwidth}
\begin{tabular}{|c||c|c|c|c|c|} \hline
         $\rc^2$    & $L^2_{0,0}$  & $L^2_{1,2}$ & $L^2_{3,1}$ & $L^2_{3,3}$  & $L^2_{3,4}$  \\ \hline \hline
          $\varphi_{5,+}$   & 0 & 3 & 4 & 1 & 2 \\ 
          \hline
          $\psi_5$   & 0 & 0 & 0 & 4 & 1\\ \hline
        \end{tabular}	
\end{minipage}
\end{center}
\vspace{0.1cm}

We have that $\rc^s$ consists of skew lines for $s=1,2$ (cf. (b) and (c) in Corollary \ref{cor:Ciskew}) and $\psi_5(\rc^1)\cap \psi_5(\rc^2)=\emptyset$, which implies that $\rc^1\cup \rc^2$ also is formed by skew lines (cf. (c) in Corollary \ref{Cor:CiCjskew}). Therefore, $\rc:=\rc^0\cup\rc^1\cup\rc^2$ consists of 13 skew lines in ${\rm F}_5$. In fact, in Theorem~\ref{Teod=5}, we will prove that ${\frak s}({\rm F}_5)=13 $.
\end{example}
        
\begin{remarks}\label{ObsMatrix} 
Note that the correspondence between lines in $\L^s$ and pairs in $R_d\times R_d$ for each $s\in\{0,1,2\}$ allows us to associate to the $d^2$ lines  in $\L^s$ (for each $s\in\{0,1,2\}$) the following $d\times d$ square matrix:
\begin{equation}\label{matrixd2}
\left(
\begin{array}{cccc}
    (0,0) & (1,0) & \cdots & (d-1,0) \\
    (0,1) & (1,1) & \cdots & (d-1,1) \\
     \vdots & \vdots & \cdots & \vdots \\
     (0,d-1) & (1,d-1) & \cdots & (d-1,d-1) 
\end{array}
\right).
\end{equation} 

Now, let us investigate the families of lines identified by the entries in the rows, columns, diagonals, and anti-diagonals of the matrix mentioned above. But first of all, it is important to make  clear that:

For each $r\in R_d$
\begin{itemize}
\item $\varphi_{d,-}^{-1}(r)$ will be named a   {\sf diagonal with remainder $r$} of the matrix in  (\ref{matrixd2}).
\item $\varphi_{d,+}^{-1}(r)$ will be named a  {\sf anti-diagonal with remainder $r$} of the matrix in  (\ref{matrixd2}).
\end{itemize}
 
 So, for example we say that $L^s_{a,b}$ and $L^s_{a_1,b_1}$ are in the same diagonal (resp. anti-diagonal) if $\varphi_{d,-}(a,b)= \varphi_{d,-}(a_1,b_1)$ (resp. $\varphi_{d,+}(a,b)= \varphi_{d,+}(a_1,b_1)$).

\vspace{0.2cm}

Note that
\begin{itemize}
\item[$({\rm i})$] The family $\L^s_k$ 
is labeled by the pairs in the $(k+1)$-th column of  the matrix in (\ref{matrixd2}).
\item[$({\rm ii})$] Any two lines labeled by pairs in the same row of the matrix in (\ref{matrixd2}) meet. 
\item[$({\rm iii})$] Each of such  diagonals and anti-diagonals determines $d$ skew lines in $\L^0$.
\item[$({\rm iv})$] Each diagonal (resp. anti-diagonal) with remainder $r$ meets the {\sf column} in the matrix (\ref{matrixd2}) in exactly one pair (i.e. in exactly one line in $\L^s_k$ for $k\in R_d$).
\item[$({\rm v})$]  Let  $L^s_{a,b},L^s_{a_1,b_1}\in\L^s$ be  disjoint and $d$ odd and $s\in \{0,1,2\}$. If $\varphi_{d,\pm}(a,b)= \varphi_{d,\pm}(a_1,b_1)$, then $\varphi_{d,\mp}(a,b)\neq \varphi_{d,\mp}(a_1,b_1)$. In other words, if $L^s_{a,b},L^s_{a_1,b_1}$ are lines on the same  diagonal,  then they are in distinct anti-diagonal, and vice versa. 
\end{itemize}
 \end{remarks}

\section{Computing ${\frak s}(\Fd)$ for $d\in\{3,5\}$}\label{Secd35}

Next we will exhibit the only two Fermat surfaces $\Fd$ satisfying ${\frak s}(\Fd)<3d$.

\subsection{Showing that ${\frak s}({\rm F}_3)=6$}


\begin{proposition}\label{Prop:d=3} Let $\rc$ be a set of skew lines on Fermat cubic ${\rm F}_3$ and consider $\rc ^s=\rc\cap \mathcal{L}^s$ for each $s=0,1,2$. If $\#\rc^s=3$ then there exists $k\in \{0,1,2\}\setminus \{s\}$ such that $\rc^k=\emptyset$. \end{proposition}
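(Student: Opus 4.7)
The plan is to treat each of the three cases $s=0,1,2$ separately, applying the characterizations from Corollaries \ref{cor:Ciskew} and \ref{Cor:CiCjskew} and exploiting the rigidity provided by $d=3$: a multiset of three elements of $R_3$ whose sum vanishes modulo $3$ must either be constant or exhaust $R_3$ (the only multisets in $\{0,1,2\}^3$ with zero sum mod $3$ are $\{0,0,0\}$, $\{1,1,1\}$, $\{2,2,2\}$, and $\{0,1,2\}$). This dichotomy is the engine of the proof.

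For $s=0$, Corollary \ref{cor:Ciskew}(a) presents $\rc^0$ as $\{(a,\sigma(a)) : a\in R_3\}$ for some permutation $\sigma$ of $R_3$, and the multisets $\varphi_{3,\pm}(\rc^0)=\{\sigma(a)\pm a\}_a$ both have zero sum modulo $3$. By the dichotomy, each is either constant or equal to $R_3$. I would then rule out that both are constant: if $\sigma(a)+a$ and $\sigma(a)-a$ were both independent of $a$, then $2a$ (and hence $a$) would be constant, contradicting the fact that $a$ runs over $R_3$. Hence at least one of $\varphi_{3,\pm}(\rc^0)$ equals $R_3$, and the second bullet of Remarks \ref{Obs:phi=rd} yields $\rc^2=\emptyset$ or $\rc^1=\emptyset$ accordingly.

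For $s=1$ (and $s=2$ by the symmetric argument, using Corollary \ref{Cor:CiCjskew}(b) in place of (a)), Corollary \ref{cor:Ciskew}(b) lets me write $\rc^1=\{(a_0,0),(a_1,1),(a_2,2)\}$ with $\{a_0,\,a_1+1,\,a_2+2\}$ a permutation of $R_3$, which forces $a_0+a_1+a_2\equiv 0 \pmod{3}$. The multiset $\psi_3(\rc^1)=\{a_i+2i\}_{i}$ then also sums to $0 \pmod{3}$, so by the dichotomy it is either $R_3$ or constant. If it equals $R_3$, the third bullet of Remarks \ref{Obs:phi=rd} immediately gives $\rc^2=\emptyset$. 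If instead $\psi_3(\rc^1)\equiv c$, then $a_i\equiv c+i \pmod{3}$, so the first coordinates $\{a_0,a_1,a_2\}$ cover all of $R_3$; hence $\rc^1$ meets every $\L^1_k$, and Corollary \ref{Cor:CiCjskew}(a) forces $\varphi_{3,-}(\rc^0)=\emptyset$, i.e.\ $\rc^0=\emptyset$.

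I do not anticipate any real obstacle: the entire argument is combinatorial bookkeeping inside the dictionary set up in Propositions \ref{PropIntersecoesLs} and \ref{PropPsiPhis}. The only substantive observation is the zero-sum dichotomy in $R_3$, after which the cases close either by direct application of Remarks \ref{Obs:phi=rd} or by noting that a degenerate (constant) value of $\psi_3$ on $\rc^1$ forces its first coordinates to exhaust $R_3$, shutting off compatibility with $\rc^0$.
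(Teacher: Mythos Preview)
Your argument is correct and follows the same overall scaffold as the paper's proof: treat each $s\in\{0,1,2\}$ separately, invoke Corollaries~\ref{cor:Ciskew} and~\ref{Cor:CiCjskew}, and conclude via Remarks~\ref{Obs:phi=rd}. The difference is in how the core congruence analysis is organized. The paper proceeds by explicit case-chasing (for $s=0$ it checks whether $b_0\equiv_3 b_1+1$ or $b_0\equiv_3 b_1+2$, and similarly for $s=1$ it splits on whether $a_0\equiv_3 a_1+2$), eventually arriving at the dichotomy that the relevant image set ($\varphi_{3,\pm}(\rc^0)$ or $\psi_3(\rc^1)$) has size either $1$ or $3$. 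You instead isolate this dichotomy up front as a lemma---a zero-sum triple in $R_3$ is either constant or a permutation of $\{0,1,2\}$---and then apply it uniformly. This buys you a cleaner and shorter argument, and makes transparent \emph{why} $d=3$ is special, while the paper's hands-on computation makes the individual congruences more visible but somewhat obscures the structural reason. Both routes close the cases by the same appeals to Remarks~\ref{Obs:phi=rd} and Corollary~\ref{Cor:CiCjskew}(a).
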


\begin{proof} Next, we will subdivide in the following three cases:

$\bullet$ Assume $\#\rc ^0=3$ and let $\rc ^0=\left\{L^0_{0,b_0},L^0_{1,b_1},L^0_{2,b_2}\right\}$ 
 with $b_j\in \{0,1,2\}$ and 
 \begin{equation}\label{Eq:congbi}
     b_0\not = b_1,\,\, b_0\not = b_2,\,\, b_1 \not = b_2.
 \end{equation}
We claim that $\#\varphi_{3,+}(\rc^0)=1$ or $\#\varphi_{3,-}(\rc^0)=1$. 
Note that $r_3(\{b_i,b_i+1,b_i+2\})=R_3$, for any $i.$ Thus $b_0\equiv_3 b_1+j$ for some $j=0,1,2.$ 
 In fact, $b_0 \not \equiv_3b_1$, so we have
 $$
  \underbrace{b_0 \equiv_3 b_1 +1}_{(i)}\quad\hbox{ or }\quad \underbrace{b_0  \equiv_3b_1+2.}_{(ii)}  
 $$

For $(i)$, have in mind that $b_1+1\equiv_3 b_2+j$ for some $j=0,1,2.$ In fact, by Equation~\eqref{Eq:congbi} we have
\begin{align*}
    b_1+1\not \equiv_3 b_2 +1 \quad \hbox{ and } \quad      b_1+1 \not \equiv_3b_2. 
 \end{align*}   
Thus $b_0\equiv_3 b_1+1\equiv_3 b_2+2$ and consequently  $\varphi_{3,+}(\rc^0)=\{b_0\}$, so $\#\varphi_{3,+}(\rc^0)=1$.

\vspace{0.2cm }

For $(ii)$ we used  that $b_1+2\equiv_3 b_2+j$ for some $j=0,1,2.$ 
 However, by Equation~\eqref{Eq:congbi} we have
\begin{align*}
    b_1+2\not \equiv_3 b_2  \quad \hbox{ and } \quad 
     b_1+2 \not \equiv_3b_2 +2  \hbox{.}
 \end{align*}   
Thus 
$b_0\equiv_3 b_1+2\equiv_3 b_2+1$, that is, $b_0\equiv_3 b_1-1\equiv_3 b_2-2$ and consequently  $\varphi_{3,-}(\rc^0)=\{b_0\}$, so $\#\varphi_{3,-}(\rc^0)=1$.

Finally, if $\#\varphi_{3,+}(\rc^0)=1$ then $\#\varphi_{3,-}(\rc^0)=3$ (cf. item (v) Remark \ref{ObsMatrix}). Which implies that    $\rc^1=\emptyset$ (cf. Remark \ref{Obs:phi=rd}). Analogously, if $\#\varphi_{3,-}(\rc^0)=1$ then $\rc^2=\emptyset$.

$\bullet$ Assume $\#\rc^1=3$ and let $\rc^1=\left\{L^1_{a_0,0},L^1_{a_1,1},L^1_{a_2,2} \right\}$ with $a_i\in \{0,1,2\}$ and 
\begin{equation}\label{Eq:congai}
a_0\not \equiv_3 a_1 +1,\,\, a_0\not \equiv_3 a_2 +2,\,\, a_1+1\not \equiv_3 a_2 +2. 
\end{equation} 
We will analyze the following two possibilities: $a_0\equiv_3a_1+2$ or  $a_0\not\equiv_3a_1+2.$

\vspace{0.1cm}

\noindent\fbox{$a_0\equiv_3a_1+2$} One more time have in mind that $a_1+2\equiv_3 a_2+j$ for some $j=0,1,2.$ 
 In fact, follows from Equation~\eqref{Eq:congai}\footnote{Note that $a_1+2 \equiv_3a_2 \Longrightarrow a_1+1 \equiv_3a_2+2$. As well as, $a_1+2 \equiv_3 a_2 +2\Longrightarrow a_0 \equiv_3 a_2 +2$.} that
\begin{align*}
    a_1+2 \not \equiv_3a_2 \quad \hbox{ and } \quad     a_1+2\not \equiv_3 a_2 +2.
 \end{align*}
Thus $a_0\equiv_3a_1+2\equiv_3 a_2+1$. This implies that $\#\psi_3(\rc^1)=1$ and therefore $\rc^0=\emptyset$.\footnote{$\#\psi_3(\rc^1)=1 \implies a_0\equiv_3 a_1+2\equiv_3 a_2+1 \implies \{a_0,a_1,a_2\}=R_3 \implies \rc^0=\emptyset$.}




\hspace{-0.4cm}\fbox{$a_0\not\equiv_3a_1+2$} In this case we will show that $\#\psi_3(\rc^1)=3$ (i.e., $r_3(\{a_0,a_1+2,a_2+1\})=R_3$). 

Since $a_0\not \equiv_3 a_1+1$ (cf. \eqref{Eq:congai}) then necessarily $a_0\equiv_3 a_1$. On the other hand, note that 
$$
a_1+2\equiv_3 a_2+1\Longrightarrow a_0+2\equiv_3 a_2+1 \Longrightarrow a_0\equiv_3 a_2+2
$$\vspace{-0.3cm} and
$$a_0\equiv_3 a_2+1 \implies a_1+1\equiv_2 a_2+2$$
which are both absurd (cf. \eqref{Eq:congai}). 
Therefore $a_0\not\equiv_3a_1+2, a_0\not\equiv_3 a_2+1, a_1+2\not \equiv a_2+1$ and this implies that $\psi_3(\rc^1)=R_3$. Furthermore $\rc^2=\emptyset$ (cf. Remarks~\ref{Obs:phi=rd}). The case where $\#\rc^2=3$ we left as an exercise for the reader. \end{proof}

\begin{corollary} ${\frak s}({\rm F}_3)=6$.
\end{corollary}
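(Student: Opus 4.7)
The plan is to combine the lower bound $2d = 6 \leq {\frak s}({\rm F}_3)$ already obtained in Corollary~\ref{Co:lbound2d} with an upper bound ${\frak s}({\rm F}_3) \leq 6$, which Proposition~\ref{Prop:d=3} essentially hands us for free. So the only thing left to verify is the upper bound.

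To get the upper bound, I would argue by contradiction. Suppose $\rc \subseteq \Phi({\rm F}_3)$ is a family of pairwise disjoint lines with $\#\rc \geq 7$, and as usual write $\rc^s = \rc \cap \L^s$ for $s = 0,1,2$ so that $\rc = \rc^0 \,\dot\cup\, \rc^1 \,\dot\cup\, \rc^2$. By Corollary~\ref{Co:r(L)} we know $\#\rc^s \leq {\frak s}(\L^s) = 3$ for each $s$. Since $\#\rc^0 + \#\rc^1 + \#\rc^2 \geq 7$ and each summand is at most $3$, the pigeonhole principle forces at least one index $s_0$ with $\#\rc^{s_0} = 3$.

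Now I invoke Proposition~\ref{Prop:d=3}: the hypothesis $\#\rc^{s_0} = 3$ guarantees the existence of some $k \in \{0,1,2\} \setminus \{s_0\}$ such that $\rc^k = \emptyset$. Writing $\{s_0, k, s_1\} = \{0,1,2\}$, we therefore have
\[
\#\rc \;=\; \#\rc^{s_0} + \#\rc^{s_1} + \#\rc^k \;\leq\; 3 + 3 + 0 \;=\; 6,
\]
contradicting $\#\rc \geq 7$. Hence ${\frak s}({\rm F}_3) \leq 6$, and together with Corollary~\ref{Co:lbound2d} we conclude ${\frak s}({\rm F}_3) = 6$.

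There is no real obstacle: all the work has been done in Corollaries~\ref{Co:r(L)}, \ref{Co:lbound2d} and in Proposition~\ref{Prop:d=3}. The only subtlety is recognizing that the size-$3$ stratum forced by the pigeonhole argument is precisely the input needed to trigger Proposition~\ref{Prop:d=3}, after which the upper bound $6$ drops out immediately.
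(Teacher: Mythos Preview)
Your proof is correct and follows essentially the same route as the paper: both combine the lower bound from Corollary~\ref{Co:lbound2d} with an upper bound obtained by contradiction via Corollary~\ref{Co:r(L)} and Proposition~\ref{Prop:d=3}. The paper phrases the contradiction slightly differently (arguing that all three $\rc^i$ must be nonempty, hence by the contrapositive of Proposition~\ref{Prop:d=3} each $\#\rc^i\leq 2$), but this is just the logically equivalent rearrangement of your pigeonhole step.
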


\begin{proof} 
Let $\rc\subset {\rm F}_3$ be a set of skew lines such that $\#\rc>6$. Then $\rc \cap \mathcal{L}^i=\rc^i\not = \emptyset,$ for each $i=0,1,2$ (cf. Corollary~\ref{Co:r(L)}). By Proposition~\ref{Prop:d=3} we may conclude that $\#\rc^i\leq 2$ for each $i=0,1,2$ which is an absurd. This implies that ${\frak s}({\rm F}_3)\leq6$. Now use Corollary~\ref{Co:lbound2d}. \end{proof}

\subsection{Showing that ${\frak s}({\rm F}_5)=13$}

\begin{lemma}\label{Lemma01d5} Let $\rc^0\subset\mathcal{L}^0$ be a set of skew lines in ${\rm F}_5$. If $\#\rc^0=5$ then $\#\varphi_{5,+}(\rc^0)\geq 3$ or $\#\varphi_{5,-}(\rc^0)\geq 3$.\end{lemma}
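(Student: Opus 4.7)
The plan is to convert the hypothesis into a statement about permutations of $R_5$ and then exploit that $2$ is a unit in $\mathbb{Z}/5\mathbb{Z}$.

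First, I would invoke Corollary~\ref{cor:Ciskew}(a): since $\#\rc^0=5$ and the $k$-indices of the lines in $\rc^0$ are pairwise distinct elements of $R_5=\{0,1,2,3,4\}$, they must exhaust $R_5$. Thus $\rc^0$ may be written as $\{L^0_{a,\sigma(a)}\mid a\in R_5\}$ for some permutation $\sigma$ of $R_5$, and
\[
\varphi_{5,+}(\rc^0)=\{r_5(\sigma(a)+a)\mid a\in R_5\},\qquad \varphi_{5,-}(\rc^0)=\{r_5(\sigma(a)-a)\mid a\in R_5\}.
\]

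Next, I would observe that the map $T\colon R_5\times R_5\to R_5\times R_5$ defined by $T(k,i)=(r_5(i+k),\,r_5(i-k))$ is a bijection. Indeed, $T$ is induced by the $\mathbb{Z}/5\mathbb{Z}$-linear map with matrix $\left(\begin{smallmatrix}1 & 1\\ -1 & 1\end{smallmatrix}\right)$, whose determinant is $2$, and $2$ is invertible modulo $5$. Applying $T$ to the five distinct pairs $(a,\sigma(a))$, $a\in R_5$, therefore produces five pairwise distinct pairs lying in $\varphi_{5,+}(\rc^0)\times\varphi_{5,-}(\rc^0)$.

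A pigeonhole argument then closes the proof: one obtains $\#\varphi_{5,+}(\rc^0)\cdot\#\varphi_{5,-}(\rc^0)\geq 5$, so if both cardinalities were at most $2$ the product would be at most $4$, a contradiction. Hence at least one of them is $\geq 3$. There is no serious obstacle; the argument is essentially a single injectivity observation combined with pigeonhole, and it works precisely because $\gcd(2,5)=1$, which is why one should not expect the same statement to hold verbatim for even $d$.
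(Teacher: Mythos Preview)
Your proof is correct and rests on the same observation as the paper's --- that for odd $d$ the map $(k,i)\mapsto(\varphi_{d,+}(k,i),\varphi_{d,-}(k,i))$ is injective (the paper invokes this via Remarks~\ref{ObsMatrix}(v)). The only difference is packaging: the paper assumes $\#\varphi_{5,+}(\rc^0)\leq 2$, picks three lines sharing a common $\varphi_{5,+}$-value and concludes they have three distinct $\varphi_{5,-}$-values, whereas you extract the product inequality $\#\varphi_{5,+}(\rc^0)\cdot\#\varphi_{5,-}(\rc^0)\geq 5$ directly from injectivity.
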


\begin{proof} Assume that  $\rc^0=\{L^0_{a_0,b_0},\ldots,L^0_{a_4,b_4}\}$. If $\#\varphi_{5,+}(\rc^0)\leq 2$ then, without lost of generality, we may assume that $a_0+b_0\equiv_5 a_1+b_1\equiv_5a_2+b_2.$ This implies that $b_0-a_0\not \equiv_5 b_1-a_1$, $b_0-a_0\not \equiv_5 b_2-a_2$ and $b_1-a_1\not \equiv_5 b_2-a_2$ (cf. (v) in Remarks \ref{ObsMatrix}). Therefore $\#\varphi_{5,-}(\rc^0)\geq 3$ as we desired.
\end{proof}

\begin{lemma}\label{lemma02d5} Let $\rc^s=\{L^s_{a_0,b_0},\ldots,L^s_{a_4,b_4}\}\subset \mathcal{L}^s$    be a set of skew lines in ${\rm F}_5$ such that $\#\rc^s=5$ for $s=1,2$. If $\#\{a_0,\ldots,a_4\}\leq 3$ then $\#\psi_5(\rc^s)\geq 3$.\end{lemma}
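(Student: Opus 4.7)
The plan is to reduce the claim to a purely combinatorial statement about permutations of $R_5$. Since $\rc^s$ is a family of $5$ skew lines in $\L^s$, item (b) (resp.\ (c)) of Corollary~\ref{cor:Ciskew} tells us the $b_i$ are all distinct, so $\{b_0,\ldots,b_4\}=R_5$, and that $\varphi_{5,+}$ is injective on $\{(a_i,b_i)\}$, forcing $\{r_5(a_i+b_i)\}=R_5$ as well. After relabeling we may assume $b_i=i$, and setting $c_i:=r_5(a_i+i)$ defines a permutation $c$ of $R_5$ with $a_i=r_5(c_i-i)$ and $\psi_5(a_i,b_i)=r_5(c_i+i)$. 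The lemma then becomes: if the map $i\mapsto r_5(c_i-i)$ has image of size $\le 3$, then $i\mapsto r_5(c_i+i)$ has image of size $\ge 3$.

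The heart of the argument is the following dichotomy, valid for any permutation $c$ of $R_5$: both $\#\{r_5(c_i-i)\}$ and $\#\{r_5(c_i+i)\}$ lie in $\{1,3,5\}$. The value $2$ is excluded by a translation--invariance argument: if, say, $c_i+i$ took only two values $u\ne v$, partitioning $R_5=S\,\dot\cup\, S^c$ with $c_i=u-i$ on $S$ and $c_i=v-i$ on $S^c$, then bijectivity of $c$ forces the nonempty proper subset $S$ to be invariant under translation by the nonzero element $v-u\in\Z/5\Z$, impossible since $\Z/5\Z$ has no nontrivial proper subgroups; the case $c_i-i$ is identical. The value $4$ is excluded by a modular sum identity: $\sum_i (c_i\pm i)\equiv 0+1+\cdots+4\equiv 0\pmod 5$, so if the multiset took $4$ distinct values in $R_5$ omitting some $w$, with some value $v$ of multiplicity $2$, the total sum would equal $v-w\not\equiv 0\pmod 5$, contradiction.

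Granted this dichotomy, the hypothesis $\#\{a_i\}=\#\{r_5(c_i-i)\}\le 3$ forces $\#\{c_i-i\}\in\{1,3\}$. If $\#\{c_i-i\}=1$, then $c$ is a translation $i\mapsto i+\alpha$, so $c_i+i=2i+\alpha$ assumes all five values (as $2$ is invertible mod $5$), giving $\#\psi_5(\rc^s)=5$. If $\#\{c_i-i\}=3$, the dichotomy restricts $\#\{c_i+i\}$ to $\{1,3,5\}$, and only $\#\{c_i+i\}=1$ must be excluded: it would force $c_i=-i+\beta$, whence $c_i-i=-2i+\beta$ would assume all five values, contradicting $\#\{c_i-i\}=3$. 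So in either subcase $\#\psi_5(\rc^s)\ge 3$. The only real obstacle is proving the dichotomy cleanly: the exclusion of cardinality $4$ in particular depends essentially on $5$ being prime together with $0+1+\cdots+4\equiv 0\pmod 5$, while the exclusion of cardinality $2$ uses simplicity of $\Z/5\Z$; both facts must be applied symmetrically to $c_i-i$ and $c_i+i$ in order to handle the reductions above.
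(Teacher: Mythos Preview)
Your proof is correct and takes a genuinely different route from the paper's. The paper argues by direct case analysis on $\#\{a_0,\ldots,a_4\}\in\{1,2,3\}$: when some three of the $a_i$ coincide (which covers $\#\le 2$ and one of the two subcases of $\#=3$), the corresponding three values $a+2b_i$ are automatically distinct since $2$ is invertible mod $5$; the remaining ``$2{+}2{+}1$'' subcase of $\#=3$ is dispatched by an ad hoc sum computation assuming $\#\psi_5(\rc^s)=2$ and deriving $a_0+2b_0\equiv_5 a_1+2b_1$. You instead encode the skew condition as a permutation $c$ of $R_5$ and prove a uniform dichotomy---both $\#\{r_5(c_i-i)\}$ and $\#\{r_5(c_i+i)\}$ lie in $\{1,3,5\}$---via a clean translation-invariance argument (to rule out $2$) and the modular sum identity (to rule out $4$), then finish symmetrically. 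Your approach is more conceptual and yields a stronger intermediate statement (the full $\{1,3,5\}$ dichotomy for both $\pm$), avoiding the case split on repetition patterns; the paper's approach is more elementary and hands-on, needing only the single contradiction in the $2{+}2{+}1$ configuration rather than a general structural fact about permutations of $\mathbb{Z}/5\mathbb{Z}$.
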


\begin{proof} We will divide the proof in three cases according to the $\#\{a_0,\ldots,a_4\}$. The first case is $\#\{a_0,\ldots,a_4\}=1$. In this case it follows that $\#\psi_5(\rc^s)=5$ since $\rc^s=\L^s_k$ for some $k\in R_5$. The second one is when $\#\{a_0,\ldots,a_4\}=2$ and in this case at least three are equal, so we may assume that $a_0=a_1=a_2$. This implies that $\#\psi_5(\rc^s)\geq 3$.\footnote{Since, $a_0+2b_i\equiv_5 a_0+2b_j\Longleftrightarrow b_i\equiv_5 b_j$ for $i\neq j$, and $i,j\in\{0,1,2\}$} The last one occurs when $\#\{a_0,\ldots,a_4\}=3$. In this case we have two possibilities (reordering indexes if necessary):
\\
[1mm]
(i) $a_0=a_1=a_2$ and $\#\{a_0,a_3,a_4\}=3$, which implies $\#\psi_5(\rc^s)\geq 3$.
\\
[1mm]
(ii) $a_0=a_1$, $a_2=a_3$ and $\#\{a_0,a_2,a_4\}=3$. In this case,
$$
a_0+2b_0\not\equiv_5 a_1+2b_1\,\hbox{and }\, a_2+2b_2\not\equiv_5 a_3+2b_3,
$$ which implies that $\#\psi_5(\rc^s)\geq 2$. Let us suppose by absurd that $\#\psi_5(\rc^s)=2$. So we may assume that $$a_0+2b_0\equiv_5 a_2+2b_2 \equiv_5 a_4+2b_4\,\textrm{ and }\, a_1+2b_1\equiv_5 a_3+2b_3.$$ Now, note that
\begin{align*}
		\sum_{i=0}^4(a_i+b_i)\equiv_5 &\ \sum_{i=0}^4a_i\equiv_5 \sum_{i=0}^4(a_i+2b_i) \equiv_5 3(a_0+2b_0)+ 2(a_1+2b_1) \\ \equiv_5  &\ (a_0+b_0)+2a_0+(a_1+b_1)+a_1+3b_1 
	\end{align*}
 which implies that $(a_2+b_2)+(a_3+b_3)+(a_4+b_4) \equiv_5 2a_0+a_1+3b_1$. Having in mind that $r_5(\{a_i+b_i\}_{i=1}^5)=R_5$ (since $\#\rc^s=5$), we  have that
 $$
 (a_2+b_2)+(a_3+b_3)+(a_4+b_4) \equiv_5 4(a_0+b_0)+4(a_1+b_1). 
 $$
 Thus,\begin{align*}
	  4(a_0+b_0)+4(a_1+b_1) \equiv_5 2a_0+a_1+3b_1  \implies & 3a_1+b_1+2a_0+4b_0\equiv_5 0 \\ \implies & 2(a_0+2b_0)\equiv_5 2(a_1+2b_1)\\ \implies & a_0+2b_0\equiv_5a_1+2b_1  
\end{align*} and this is an absurd.  Therefore $\#\psi_5(\rc^s)\geq 3$ for $s=1,2$.\end{proof}

\begin{lemma}\label{lemma03d5} Let $\rc$ be a set of skew lines in ${\rm F}_5$, $\rc^i=\rc\cap \mathcal{L}^i$ for $i=0,1,2$ such that $\#\rc^0\geq 4$. If $\rc^s=\{ L^s_{a_0,b_0},\ldots,L^s_{a_4,b_4}\}$ with $\#\rc^s=5$ and $\#\rc^r = 4$ where $\{s,r\}=\{1,2\}$ then $\#\{a_0,\ldots,a_4\}\geq 3$.
\end{lemma}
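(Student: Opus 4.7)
The plan is to argue by contradiction: assume $\#\{a_0,\ldots,a_4\}\le 2$ and use only the skewness of $\rc^s$ together with $\rc^r\neq\emptyset$ to reach an absurd. By the symmetry of $s$ and $r$ in both Corollary~\ref{cor:Ciskew} and Remark~\ref{Obs:phi=rd}, it suffices to treat $s=1,\,r=2$; the case $s=2,\,r=1$ is identical with Corollary~\ref{cor:Ciskew}(c) replacing (b).

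The core step extracts the arithmetic content of skewness. Because $\rc^1$ consists of $5$ pairwise skew lines, Corollary~\ref{cor:Ciskew}(b) forces the $b_i$ to be pairwise distinct and $\{r_5(a_i+b_i)\}_{i=0}^{4}$ to be pairwise distinct; since both sets live in $R_5=\{0,1,2,3,4\}$, each must equal $R_5$. Summing and subtracting,
\[
\sum_{i=0}^{4} a_i \;\equiv\; \sum_{i=0}^{4}(a_i+b_i)\;-\;\sum_{i=0}^{4} b_i \;\equiv\; (0+1+2+3+4)-(0+1+2+3+4) \;\equiv\; 0 \pmod{5}.
\]

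I then dispatch the two forbidden cases. If $\#\{a_i\}=1$, then $\rc^1=\L^1_k$ for some $k\in R_5$, and the first bullet of Remark~\ref{Obs:phi=rd} (applicable since $d=5$ is odd) forces $\rc^2=\emptyset$, contradicting $\#\rc^2=4$. If $\#\{a_i\}=2$, write the two distinct values as $k\neq k'$ with multiplicities $(n_k,n_{k'})\in\{(4,1),(3,2),(2,3),(1,4)\}$; the displayed identity becomes $n_k\,k+n_{k'}\,k'\equiv 0\pmod 5$, which in each case collapses to $k\equiv k'\pmod 5$ --- explicitly $4k+k'\equiv k'-k$ and $3k+2k'\equiv 2(k'-k)$ modulo $5$, with $2$ invertible mod $5$. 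This contradicts $k\neq k'$, so $\#\{a_0,\ldots,a_4\}\ge 3$.

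The single genuinely substantive observation is the mod-$5$ identity $\sum a_i\equiv 0$; once it is in hand, the partitions $(5),(4,1),(3,2)$ of $5$ all fail arithmetically, so I do not foresee any real obstacle. It is worth noting that the hypothesis $\#\rc^0\ge 4$ does not enter this particular lemma at all --- it belongs to the surrounding setup used to compute ${\frak s}({\rm F}_5)$ in the subsequent theorem.
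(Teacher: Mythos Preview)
Your proof is correct, and in fact cleaner than the paper's. Both arguments agree on the case $\#\{a_i\}=1$ and on the $(3,2)$ multiplicity case, where the paper also uses the identity $\sum_i(a_i+b_i)\equiv_5 0$ (together with $\{b_i\}=R_5$) to force $3a_0+2a_3\equiv_5 0$ and hence $a_0\equiv_5 a_3$. The difference lies in the $(4,1)$ multiplicity case: the paper does \emph{not} apply the sum identity there, but instead argues that four equal $a_i$'s force $\#\psi_5(\rc^s)\ge 4$, hence $\#\psi_5(\rc^r)\le 1$, and then chases consequences through $\rc^r$ and $\rc^0$ (via Corollary~\ref{Cor:CiCjskew} and Remark~\ref{ObsMatrix}(v)) to reach a contradiction; this detour genuinely uses the hypothesis $\#\rc^0\ge 4$. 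Your uniform treatment of all partitions $(5),(4,1),(3,2)$ via $\sum a_i\equiv_5 0$ bypasses this entirely, so your final remark is well taken: with your argument the assumption $\#\rc^0\ge 4$ is indeed superfluous for the lemma itself, and you have slightly strengthened the statement.
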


\begin{proof} Note that, if $\#\{a_0,\ldots,a_4\}=1$, then $\rc^s=\L^s_k$ for some $k\in R_5$. And this implies that $\rc^r=\emptyset$ which is an absurd (cf. Remarks \ref{Obs:phi=rd}). Now, if $\#\{a_0,\ldots,a_4\}=2$, then we have two possibilities (reordering indexes if necessary): 

(i) $a_0=a_1=a_2=a_3$ and $a_0\not =a_4\quad $  (ii) $a_0=a_1=a_2$ and $a_0\not = a_3=a_4$.

In case (i) it follows that $\#\psi_5(\rc^s)\geq 4$ and $\#\psi_5(\rc^r)\in\{0,1\}$ (since $\psi_5(\rc^s)\cap \psi_5(\rc^r)=\emptyset$). Hence, if $\#\psi_5(\rc^r)=0$ then $\rc^r=\emptyset$, else $\#\psi_5(\rc^r)=1$ which implies $\#\{a_0, \ldots, a_4\}\leq 1$ and this is an absurd.\footnote{In fact, assume that $s=1,r=2$,  $\rc^2=\{ L^2_{a_0',b_0'},\ldots,L^2_{a_3',b_3'}\}$. As $\#\psi_5(C^2)=1$ then $\#\{a_0',\ldots,a_3'\}=4$ (if $a_0'=a_1'$, then $a_0'+2b_0\not \equiv_5 a_1' +2b_1'$ since $b_0'\not \equiv_5 b_1'$, which is an absurd). Therefore, $\#\varphi_{5,+}(\rc^0)=1$ (cf. Corollary~\ref{Cor:CiCjskew}) which implies that $\#\varphi_{5,-}(\rc^0)\geq 4$. Hence,  $\#\{a_0, \ldots, a_4\}\leq 1$.
} 

For (ii) note that  
\begin{align*}
	0\equiv_5 \sum_{i=0}^4(a_i+b_i)\equiv_5 3a_0+2a_3 \implies 3a_0\equiv_5 3a_3 \implies a_0\equiv_5 a_3 \implies a_0=a_3
\end{align*} which is an absurd.\end{proof}

\begin{theorem}\label{Teod=5} ${\frak s}({\rm F}_5)= 13$.\end{theorem}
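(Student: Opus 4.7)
The lower bound ${\frak s}({\rm F}_5) \geq 13$ is already supplied by Example \ref{exampled=5}, so the plan is to prove the upper bound ${\frak s}({\rm F}_5) \leq 13$ by contradiction. Suppose $\rc \subset \Phi({\rm F}_5)$ is a set of skew lines with $\#\rc \geq 14$, and set $\rc^i := \rc \cap \L^i$. Since each $\#\rc^i \leq 5$ by Corollary \ref{Co:r(L)}, at least two of $\rc^0, \rc^1, \rc^2$ have size exactly $5$, and I split into two cases accordingly.

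First I would treat $\#\rc^1 = \#\rc^2 = 5$, which covers both $\#\rc = 15$ and the partition $(4,5,5)$. By Corollary \ref{Cor:CiCjskew}(c), $\psi_5(\rc^1)$ and $\psi_5(\rc^2)$ are disjoint, so their sizes sum to at most $5$, and hence the contrapositive of Lemma \ref{lemma02d5} forces at least one of them---say $\rc^2$---to satisfy $\#\{a_i^{(2)}\} \geq 4$. Corollary \ref{Cor:CiCjskew}(b) then gives $\#\varphi_{5,+}(\rc^0) \leq 1$, hence $=1$ because $\#\rc^0 \geq 4$. The $a$-coordinates of the skew set $\rc^0$ are distinct, so a common value $u$ of $\varphi_{5,+}$ makes $b_i = u - a_i$ and the differences $b_i - a_i = u - 2a_i$ all distinct, giving $\#\varphi_{5,-}(\rc^0) = \#\rc^0 \geq 4$. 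Corollary \ref{Cor:CiCjskew}(a) then confines $\rc^1$ to at most one column of $\L^1$, so $\rc^1 = \L^1_k$ for some $k$; by Remarks \ref{Obs:phi=rd} this forces $\rc^2 = \emptyset$, contradicting $\#\rc^2 = 5$.

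The remaining case, after swapping $\L^1 \leftrightarrow \L^2$ if necessary, is $\#\rc^0 = 5$, $\#\rc^1 = 5$, $\#\rc^2 = 4$. Lemma \ref{lemma03d5} gives $\#\{a_i^{(1)}\} \geq 3$, so Corollary \ref{Cor:CiCjskew}(a) yields $\#\varphi_{5,-}(\rc^0) \leq 2$, while Lemma \ref{Lemma01d5} yields $\#\varphi_{5,+}(\rc^0) \geq 3$. The crucial step---and the one I expect to be the main obstacle---is to rule out $\#\varphi_{5,-}(\rc^0) = 2$: writing $\rc^0 = \{L^0_{a,\sigma(a)} : a \in R_5\}$ for a permutation $\sigma$ of $R_5$, if $a \mapsto \sigma(a) - a$ took only two distinct values $c_1, c_2$ on a partition $R_5 = A_1 \sqcup A_2$, then bijectivity of $\sigma$ would force $A_1 + c_1 = A_1 + c_2$, i.e.\ $A_1$ would be invariant under the nontrivial translation by $c_2 - c_1$ in $\mathbb{Z}/5\mathbb{Z}$, which is impossible since $5$ is prime. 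Hence $\#\varphi_{5,-}(\rc^0) = 1$, all of $\rc^0$ lies on a single diagonal, $\#\varphi_{5,+}(\rc^0) = 5$, and Corollary \ref{Cor:CiCjskew}(b) forces $\rc^2 = \emptyset$, contradicting $\#\rc^2 = 4$. Apart from this little algebraic fact about permutations of $\mathbb{Z}/5\mathbb{Z}$, the proof is just a careful combination of Lemmas \ref{Lemma01d5}--\ref{lemma03d5}, Corollary \ref{Cor:CiCjskew} and Remarks \ref{Obs:phi=rd}.
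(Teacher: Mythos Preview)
Your proof is correct and follows the paper's overall strategy: the same two-case split ($\#\rc^1=\#\rc^2=5$ versus $\#\rc^0=5$ with $\{\#\rc^1,\#\rc^2\}=\{4,5\}$) and the same auxiliary Lemmas \ref{Lemma01d5}--\ref{lemma03d5} together with Corollary \ref{Cor:CiCjskew} and Remarks \ref{Obs:phi=rd}. Where you differ is in economy. In the first case the paper shows $\#\{a_i\}\le 3$ for \emph{both} $\rc^1$ and $\rc^2$ and then invokes Lemma \ref{lemma02d5} twice; you take the contrapositive once, push one of the families into a single column, and finish via Remarks \ref{Obs:phi=rd}, which is shorter. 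In the second case the paper separately argues $\#\{a'_i\}\le 3$, upgrades to equality via Lemma \ref{lemma03d5}, and then disposes of $\#\varphi_{5,-}(\rc^0)=2$ by treating the partition types $(4,1)$ and $(3,2)$ individually (the first through another pass of Lemma \ref{lemma02d5}, the second through a sum-of-residues trick). Your translation-invariance observation---that a nonempty proper subset of $\mathbb{Z}/5\mathbb{Z}$ cannot be stable under a nonzero shift---handles both partition types uniformly and lets you work directly from $\#\{a_i^{(1)}\}\ge 3$, so the upper bound $\le 3$ is never needed. One small remark: your citation of Lemma \ref{Lemma01d5} is superfluous, since you go on to prove $\#\varphi_{5,-}(\rc^0)=1$ directly and never use $\#\varphi_{5,+}(\rc^0)\ge 3$.
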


\begin{proof} Let $\rc$ be a set of skew lines in ${\rm F}_5$. Let us suppose that $\#\rc\geq 14$. In fact, it is enough to analyse the case $\#\rc= 14.$
Define $\rc^i=\rc\cap \mathcal{L}^i$, with $i=0,1,2$. Note that only one of the  possibilities happens:\begin{enumerate}
	\item[$(a)$] $\#\rc^0=4$ and $\#\rc^1=\#\rc^2=5$;
	\item[$(b)$] $\#\rc^0=5$, $\#\rc^s=5$ and $\#\rc^r=4$  for $\{r,s\}=\{1,2\}$. 
\end{enumerate}
For $(a)$ let us consider $\rc^1=\{L^1_{a_0,b_0},\ldots,L^1_{a_4,b_4}\}$ and $\rc^2=\{L^2_{a_0',b_0'},\ldots,L^2_{a_4',b_4'}\}$. Note that $$\#\{a_0,\ldots,a_4\}\leq 3\ \textrm{ or }\ \#\{a_0,\ldots,a_4\}>3.$$ The last inequality can not occur because other way \begin{enumerate}
	\item[$(i)$] if $\#\{a_0,\ldots,a_4\}=5$ then $\#\varphi_{5,-}(C^0)=0 $ and this implies that $\rc^0=\emptyset$;
	\item[$(ii)$] if $\#\{a_0,\ldots,a_4\}=4$, then $\#\varphi_{5,-}(\rc^0)= 1$. Hence, $\#\varphi_{5,+}(C^0)=4$. Therefore, $\#\{a_0',\ldots,a_4'\}=1$, which implies that $\#\psi_5(C^2)=5$. Furthermore, $\#\psi_5(C^1)=0$ which is an absurd.
\end{enumerate}
Therefore, $\#\{a_0,\ldots,a_4\}\leq 3$. Analogously, we may conclude that $\#\{a_0',\ldots,a_4'\}\leq 3$. It follows from  Lemma~\ref{lemma02d5} that $\#\psi_5(\rc^1)\geq 3$ and $\#\psi_5(\rc^2)\geq 3$, which is an absurd by Remark~\ref{Obs:phi=rd}.

For $(b)$, let us assume that $\#\rc^0=\#\rc^1=5$ and $\#\rc^2=4$ (the other case is analogous). Let us consider $\rc^0=\{L^0_{a_0,b_0},\ldots,L^0_{a_4,b_4}\}$,  $\rc^1=\{L^1_{a_0',b_0'},\ldots,L^1_{a_4',b_4'}\}$  and $\rc^2=\{L^2_{a_0'',b_0''},\ldots,L^2_{a_3'',b_3''}\}$. Using arguments analogous to cases $(i)$ and $(ii)$ we may conclude\footnote{If $\#\{a_0',...,a_4'\}=5$ then $\rc^0=\emptyset$. If $\#\{a_0',...,a_4'\}=4$ then $\#\varphi_{5,-}(\rc^0)= 1$. Hence $\#\varphi_{5,+}(\rc^0)=5$ and this implies that $\rc^2=\emptyset$, which is an absurd.} that $\#\{a_0',\ldots,a_4'\}\leq 3$. Now, it follows from  Lemma~\ref{lemma03d5}  that $\#\{a_0',\ldots,a_4'\}=3$. So $\#\big(\varphi_{5,-}(\rc^0)\big)\leq 2$. Now, we will analyze all three possibilities:\begin{enumerate}
	\item[$(iii)$] if $\#\varphi_{5,-}(\rc^0)=0$, then $\rc^0=\emptyset$;
	\item[$(iv)$] if $\#\varphi_{5,-}(\rc^0)=1$, then $\#\varphi_{5,+}(\rc^0)=5$ and this implies that $\rc^2=\emptyset$;
	\item[$(v)$] if $\#\varphi_{5,-}(\rc^0)=2$, then we may assume that \begin{equation}\label{teoeq1d5}
		b_0-a_0\equiv_5b_1-a_1\equiv_5b_2-a_2\equiv_5 b_3-a_3 \quad \textrm{ and }\quad b_0-a_0 \not \equiv_5 b_4-a_4
	\end{equation}$$ \textrm{ or } $$\begin{equation}\label{teoeq2d5}
		b_0-a_0\equiv_5b_1-a_1\equiv_5b_2-a_2 \quad \textrm{ and }\quad b_0-a_0 \not \equiv_5 b_4-a_4\equiv_5 b_3-a_3
	\end{equation} 
\end{enumerate} By Equation~\eqref{teoeq1d5}, we may conclude that $\#\big(\varphi_{5,+}(\rc^0)\big)\geq 4$. So, $\#\{a_0'',\ldots,a_3''\}=1$. This implies that $\#\psi_5(\rc^2)=4$ and consequently $\#\psi_5(\rc^1)= 1$ by Remark~\ref{Obs:phi=rd}. And this is an absurd by Lemma~\ref{lemma02d5}. Finally, by Equation~\eqref{teoeq2d5} it follows that \begin{align*}
	\sum_{i=0}^4(b_i-a_i)\equiv_50 \implies & 3(b_0-a_0)+2(b_4-a_4)\equiv_5 0 \\ \implies &  3(b_0-a_0)\equiv_5 3(b_4-a_4) \\ \implies &  b_0-a_0\equiv_5 b_4-a_4
\end{align*}
which  is an absurd. Therefore, $\#\rc\leq 13$ for any set $\rc$ of skew lines in $F_5$. On the other hand, the  Example~\ref{exampled=5} shows an example with $13$ skew lines, thus ${\frak s}({\rm F}_5)= 13$.
\end{proof}

\section{Addressing the case $d\geq 4$ and $d\not = 5$}\label{Secdgeq4}
From Corollary \ref{Co:lbound2d}, we have that $2d\leq {\frak s}(\Fd)\leq 3d$ for any $d\geq 3$. For $d\geq 4$ even, we have that ${\frak s}(\Fd)= 3d$, as we prove in the next proposition. However, for $d\geq 7$ odd, we will devide our study into two cases: $d\equiv_4 1$ and $d\equiv_4 3$ being $d\geq 7$.

\begin{proposition}
\label{Prop:dpar}
 Let $d\geq 4$ even. If $\rc^0=\Big\{L^0_{a,a}\Big\}_{a=0}^{d-1}$ and   $\rc^s=\Big\{L^s_{1,i}\Big\}_{i=0}^{d-1}$ for $s=1,2$, then $\rc^0\cup\rc^1\cup\rc^2$ consists of $3d$ skew lines in $\Fd$. 
\end{proposition}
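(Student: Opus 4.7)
The plan is to verify directly that each of the three families consists of $d$ skew lines and that any two of them are mutually disjoint, using only the intersection criteria in Proposition~\ref{PropIntersecoesLs}. Since the three families are pairwise disjoint as subsets of lines (they live in $\L^0,\L^1,\L^2$ respectively) and each has cardinality $d$, this would immediately yield $\#(\rc^0\cup\rc^1\cup\rc^2)=3d$.

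First I would check that each $\rc^s$ is itself a set of skew lines. For $\rc^0=\{L^0_{a,a}\}_{a=0}^{d-1}$, apply item~(a) of Proposition~\ref{PropIntersecoesLs}: $L^0_{a,a}\cap L^0_{a',a'}\neq\emptyset$ iff $a=a'$ or $a=a'$, so for $a\neq a'$ the lines are disjoint. For $\rc^s=\L^s_1$ with $s=1,2$, apply item~(b): $L^s_{1,i}\cap L^s_{1,j}\neq\emptyset$ iff $1+i\equiv_d 1+j$ or $i=j$, both of which force $i=j$.

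Next I would handle the three cross-intersections. By item~(a) of Proposition~\ref{PropIntersecoesLs}, $L^0_{a,a}\cap L^1_{1,i}\neq\emptyset$ iff $a-a\equiv_d 1$, i.e.\ $0\equiv_d 1$, which fails since $d\geq 4$; thus $\rc^0$ and $\rc^1$ are mutually disjoint. Similarly, $L^0_{a,a}\cap L^2_{1,i}\neq\emptyset$ iff $a+a\equiv_d 1$, i.e.\ $2a\equiv_d 1$; here the hypothesis that $d$ is \emph{even} is crucial, since then $2a$ is always even mod~$d$ whereas $1$ is odd, making this congruence impossible. Finally, for $\rc^1$ vs.\ $\rc^2$, both families use the same index $k=1$, so item~(d) of Proposition~\ref{PropIntersecoesLs} applies verbatim: $L^1_{1,i}\cap L^2_{1,j}=\emptyset$ for all $i,j$ whenever $d$ is even.

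There is no real obstacle in this argument; the only step that uses the parity assumption in a nontrivial way is the $\rc^0$ vs.\ $\rc^2$ check (and, through item~(d) of Proposition~\ref{PropIntersecoesLs}, the $\rc^1$ vs.\ $\rc^2$ check). Everything else is a direct substitution into the criteria already established. Assembling the three bullet points yields that $\rc^0\cup\rc^1\cup\rc^2\subset\Fd$ is a family of $3d$ pairwise skew lines, as desired.
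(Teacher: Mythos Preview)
Your proof is correct and follows essentially the same route as the paper. The only cosmetic difference is that the paper packages the cross-checks through Corollaries~\ref{cor:Ciskew} and~\ref{Cor:CiCjskew} (computing $\varphi_{d,-}(\rc^0)=\{0\}$ and $\varphi_{d,+}(\rc^0)\subset\{\text{even residues}\}$, so that $1$ is excluded in both cases), whereas you appeal directly to the intersection criteria of Proposition~\ref{PropIntersecoesLs}; the underlying verifications are identical.
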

\begin{proof} It follows from Corollary \ref{cor:Ciskew} that $\rc^s$ consists of $d$ skew lines for each $s\in\{0,1,2\}$.\footnote{Note that $\varphi_{d,+}(\rc^1)=R_d=\varphi_{d,+}(\rc^2)$ and $\#\rc^s=d$ for all $s$.} On the other hand, $\varphi_{d,-}(\rc^0)=\{0\}$ and $\varphi_{d,+}(\rc^0)=\{0,2,\ldots, 2d-2\}$, which implies that $\rc^0\cup\rc^s$ consists of skew lines for $s=1,2$, respectively (cf. Corollary \ref{Cor:CiCjskew}). Finally, note that the statement (d) in Proposition \ref{PropIntersecoesLs} assures us that  $\rc^1\cup\rc^2$ also is formed by  skew lines. Therefore, $\rc^0\cup\rc^1\cup\rc^2$ consists of $3d$ skew lines in $\Fd$. 
\end{proof}

Below we will discuss some more examples that led us to believe that  ${\frak s}(\Fd)= 3d$ for $d\geq 7$ odd.

\begin{example}\label{ex:d=7} For $d=7$, let us consider $\rc^0=\{ L^0_{0,0}, L^0_{1,3}, L^0_{2,2}, L^0_{3,5}, L^0_{4,4}, L^0_{5,6}, L^0_{6,1}
\}$ which  consists of seven skew lines (cf. (a) in Corollary \ref{cor:Ciskew}). Furthermore, in the rows of the next table we register  the values of $\varphi_{7,\pm}(\rc^0)$, respectively.
\begin{center}
\begin{tabular}{|c||c|c|c|c|c|c|c|} \hline
         $\rc^0$    & $L^0_{0,0}$ & $L^0_{1,3}$  & $L^0_{2,2}$ & $L^0_{3,5}$ & $L^0_{4,4}$ & $L^0_{5,6}$ & $L^0_{6,1}$ \\ \hline \hline
          $\varphi_{7,-}$   & 0 & 2 & 0 & 2 & 0 & 1 & 2 \\ 
          \hline
          $\varphi_{7,+}$   & 0 & 4 & 4 & 1 & 1 & 4 & 0\\ \hline
        \end{tabular}
\end{center}

Now, having in mind  Corollary \ref{Cor:CiCjskew} for the choice of $\rc^s\subset\L^s$ such that $\rc^0\cup\rc^s$ is constituted by skew lines for $s=1,2$, it is necessary that 
$$
\rc^1\cap\L^2_k=\emptyset\quad\forall\, k\in\{0,1,2\}\quad\hbox{and}\quad \rc^2\cap\L^2_k=\emptyset\quad\forall\, k\in\{0,1,4\}.
$$ So,   $\rc^1=\{ L^1_{4,0},L^1_{4,2},L^1_{5,3},L^1_{5,4},L^1_{5,5},L^1_{6,1},L^1_{6,6}\}$ and $\rc^2=\{ L^2_{2,0},L^2_{2,5},L^2_{2,6},L^2_{3,1},L^2_{3,2},L^2_{3,3},L^2_{6,4}\}$ are admissible choices. As well, according to the information on the rows in the following two  tables:

\vspace{0.1cm}

\begin{center}
\begin{tabular}{|c||c|c|c|c|c|c|c|} \hline
         $\rc^1$    & $L^1_{4,0}$  & $L^1_{4,2}$ & $L^1_{5,3}$ & $L^1_{5,4}$ & $L^1_{5,5}$ & $L^1_{6,1}$ & $L^1_{6,6}$  \\ \hline \hline
          $\varphi_{7,+}$   & 4 & 6 & 1 & 2 & 3 & 0 & 5 \\
          \hline
          $\psi_7$   & 4 & 1 & 4 & 6 & 1 & 1 & 4 \\ \hline
        \end{tabular}	
\end{center}
\vspace{0.1cm}
\begin{center}
\begin{tabular}{|c||c|c|c|c|c|c|c|} \hline
         $\rc^2$    & $L^2_{2,0}$  & $L^2_{2,5}$ & $L^2_{2,6}$ & $L^2_{3,1}$  & $L^2_{3,2}$  & $L^2_{3,3}$ & $L^2_{6,4}$\\ \hline \hline
          $\varphi_{7,+}$   & 2 & 0 & 1 & 4 & 5 & 6 & 3 \\ 
          \hline
          $\psi_7$   & 2 & 5 & 0 & 5 & 0 & 2 & 0\\ \hline
        \end{tabular}	
\end{center}
\vspace{0.2cm}
we have that $\rc^s$ consists of skew lines for $s=1,2$ (cf. (b) and (c) in Corollary \ref{cor:Ciskew}) and $\psi_7(\rc^1)\cap \psi_7(\rc^2)=\emptyset$, which implies that $\rc^1\cup \rc^2$ also is formed by skew lines (cf. (c) in Corollary \ref{Cor:CiCjskew}). Therefore, $\rc:=\rc^0\cup\rc^1\cup\rc^2$ consists of 21 skew lines in ${\rm F}_7$. Thus, ${\frak s}({\rm F}_7)=21$ (since ${\frak s}({\rm F}_7)\leq 21$).
\end{example}

Let us go now to case $d=9$ and $d=11$.
\begin{example}\label{ex:d9ed11} The next tables contain  the necessary information  to conclude that those 27 lines (in ${\rm F}_9$) bellow are pairwise disjoint.

\begin{center}
\begin{tabular}{|c||c|c|c|c|c|c|c|c|c|} \hline
         $\rc^0$    & $L^0_{4,1}$ & $L^0_{5,2}$  & $L^0_{6,3}$ & $L^0_{7,7}$ & $L^0_{8,8}$ & $L^0_{0,0}$ & $L^0_{1,4}$  & $L^0_{2,5}$ & $L^0_{3,6}$\\ \hline \hline
          $\varphi_{9,-}$   & 6 & 6 & 6 & 0 & 0 & 0 & 3 & 3 & 3 \\ 
          \hline
          $\varphi_{9,+}$   & 5 & 7 & 0 & 5 & 7 & 0 & 5 & 7 & 0\\ \hline
        \end{tabular}
\end{center}
\begin{center}
\begin{tabular}{|c||c|c|c|c|c|c|c|c|c|} \hline
         $\rc^1$    & $L^1_{5,0}$  & $L^1_{5,1}$ & $L^1_{1,2}$ & $L^1_{1,3}$ & $L^1_{7,4}$ & $L^1_{5,5}$ & $L^1_{2,6}$ & $L^1_{2,7}$ & $L^1_{8,8}$  \\ \hline \hline
          $\varphi_{9,+}$   & 5 & 6 & 3 & 4 & 2 & 1 & 8 & 0 & 7 \\
          \hline
          $\psi_9$   & 5 & 7 & 5 & 7 & 6 & 6 & 5 & 7 & 6 \\ \hline
        \end{tabular}	
\end{center}

\begin{center}
\begin{tabular}{|c||c|c|c|c|c|c|c|c|c|} \hline
         $\rc^2$    & $L^2_{1,0}$  & $L^2_{1,1}$ & $L^2_{6,2}$ & $L^2_{6,3}$  & $L^2_{2,4}$  & $L^2_{2,5}$ & $L^2_{6,6}$ &  $L^2_{6,7}$ & $L^2_{6,8}$ \\ \hline \hline
          $\varphi_{9,+}$   & 1 & 2 & 8 & 0 & 6 & 7 & 3 & 4 & 5\\ 
          \hline
          $\psi_9$   & 1 & 3 & 1 & 3 & 1 & 3 & 0 & 2 & 4\\ \hline
        \end{tabular}	
\end{center}

Now, we show the tables for the  lines in ${\rm F}_{11}$
\begin{center}
\begin{tabular}{|c||c|c|c|c|c|c|c|c|c|c|c|} \hline
         $\rc^0$    & $L^0_{5,0}$ & $L^0_{6,1}$  & $L^0_{7,2}$ & $L^0_{8,3}$ & $L^0_{9,7}$ & $L^0_{10,8}$ & $L^0_{0,9}$  & $L^0_{1,4}$ & $L^0_{2,5}$ & $L^0_{3,6}$ & $L^0_{4,10}$\\ \hline \hline
          $\varphi_{11,-}$   & 6 & 6 & 6 & 6 & 9 & 9 & 9 & 3 & 3 & 3 & 6\\ 
          \hline
          $\varphi_{11,+}$   & 5 & 7 & 9 & 0 & 5 & 7 & 9 & 5 & 7 & 9 & 3\\ \hline
        \end{tabular}
\end{center}
\begin{center}
\begin{tabular}{|c||c|c|c|c|c|c|c|c|c|c|c|} \hline
         $\rc^1$    & $L^1_{8,0}$  & $L^1_{8,1}$ & $L^1_{4,2}$ & $L^1_{4,3}$ & $L^1_{0,4}$ & $L^1_{0,5}$ & $L^1_{7,6}$ & $L^1_{7,7}$ & $L^1_{4,8}$& $L^1_{1,9}$ & $L^1_{1,10}$  \\ \hline \hline
          $\varphi_{11,+}$   & 8 & 9 & 6 & 7 & 4 & 5 & 2 & 3 & 1 & 10 & 0 \\
          \hline
          $\psi_{11}$   & 8 & 10 & 8 & 10 & 8 & 10 & 8 & 10 & 9 & 8 & 10 \\ \hline
        \end{tabular}	
\end{center}

\begin{center}
\begin{tabular}{|c||c|c|c|c|c|c|c|c|c|c|c|} \hline
         $\rc^2$    & $L^2_{1,0}$  & $L^2_{1,1}$ & $L^2_{1,2}$ & $L^2_{6,3}$  & $L^2_{6,4}$  & $L^2_{6,5}$ & $L^2_{2,6}$ &  $L^2_{8,7}$ & $L^2_{8,8}$ &  $L^2_{8,9}$ & $L^2_{8,10}$  \\ \hline \hline
          $\varphi_{11,+}$   & 1 & 2 & 3 & 9 & 10 & 0 & 8 & 4 & 5 & 6 & 7\\ 
          \hline
          $\psi_{11}$   & 1 & 3 & 5 & 1 & 3 & 5 & 3 & 0 & 2 & 4 & 6\\ \hline
        \end{tabular}	
\end{center}

\end{example}

In Propositions \ref{Prop:dimparnpar} and \ref{Prop:dimparnimpar}, the indices $a,b$ in the notation $L_{a,b}^s$ are always to be considered modulo $d$.

\begin{proposition}
\label{Prop:dimparnpar} Let $d=2n+1$ with $n=2k$ and $k\geq 3$. Consider the families 
$$
\rc^0=
\Big\{ L^0_{1+i, 2k+i}\Big\}_{i=0}^k
\cup 
\Big\{L^0_{k+i,k+i}\Big\}_{i=2}^{k-1}
\cup
\Big\{ L^0_{2k+i,1+i}\Big\}_{i=0}^k
\cup 
\Big\{ L^0_{3k+i,3k+i}\Big\}_{i=1}^{k+1},
$$
$$
\rc^1=\Big\{ L^1_{2k+1,2k+i}\Big\}_{i=1}^{k-1}
\cup 
\Big\{ L^1_{3k+1,2k}\Big\}
\cup 
 \Big\{ L^1_{1,k+i}\Big\}_{i=0}^{k-1}
 \cup
 \Big\{ L^1_{2k+1,i}\Big\}_{i=0}^{k-1}
 \cup 
\Big\{ L^1_{3k+2,4k}\Big\}
\cup
 \Big\{ L^1_{2, 3k+i}\Big\}_{i=0}^{k-1},
$$
$$
\rc^2=\Big\{ L^2_{1,i}\Big\}_{i=0}^{k-1}
\cup 
\Big\{ L^2_{2k+2,3k+i}\Big\}_{i=0}^{k}
\cup 
 \Big\{ L^2_{2,2k+i}\Big\}_{i=0}^{k-1}
 \cup
 \Big\{ L^2_{2k+2,k+i}\Big\}_{i=0}^{k-1}.
$$ It is verified that $\rc^0\cup\rc^1\cup\rc^2$ consists of $3d$ skew lines in ${\rm F}_d$.
\end{proposition}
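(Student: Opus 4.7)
The strategy is direct verification via the skewness criteria of Corollaries \ref{cor:Ciskew} and \ref{Cor:CiCjskew}, and the plan splits naturally into three parts. First I would check that each $\rc^s$ is a family of $d$ skew lines. For $\rc^0$, Corollary \ref{cor:Ciskew}(a) reduces this to showing that the first indices $a$, and also the second indices $b$, of the $d$ listed pairs each form a permutation of $R_d=\{0,1,\ldots,4k\}$. The $a$-values partition into the consecutive (mod $d$) runs $\{1,\ldots,k+1\}$, $\{k+2,\ldots,2k-1\}$, $\{2k,\ldots,3k\}$, $\{3k+1,\ldots,4k\}\cup\{0\}$, whose union is $R_d$, and an analogous bookkeeping handles the $b$-values. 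For $\rc^1$ and $\rc^2$, Corollary \ref{cor:Ciskew}(b)--(c) requires that the second indices cover $R_d$ once and that $\varphi_{d,+}$ be injective on the listed pairs; a block-by-block computation shows in fact that $\varphi_{d,+}(\rc^1) = \varphi_{d,+}(\rc^2) = R_d$, so both restrictions are bijections.

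Next I would address the pairwise unions $\rc^0\cup\rc^s$ using Corollary \ref{Cor:CiCjskew}(a)--(b). Evaluating $\varphi_{d,-}$ on each block of $\rc^0$ yields
\[
\varphi_{d,-}(\rc^0) = \{0,\; 2k-1,\; 2k+2\},
\]
while the first indices appearing in $\rc^1$ are $\{1,2,2k+1,3k+1,3k+2\}$; the hypothesis $k\geq 3$ ensures these two subsets of $R_d$ are disjoint modulo $d=4k+1$, giving skewness of $\rc^0\cup\rc^1$. Similarly, one computes
\[
\varphi_{d,+}(\rc^0) = \{0\}\cup\{2k+1,\,2k+3,\ldots,4k-1\}\cup\{2k+4,\,2k+6,\ldots,4k-2\},
\]
which avoids the first indices $\{1,2,2k+2\}$ of $\rc^2$, giving skewness of $\rc^0\cup\rc^2$.

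The last and most delicate step is Corollary \ref{Cor:CiCjskew}(c), the disjointness $\psi_d(\rc^1)\cap\psi_d(\rc^2)=\emptyset$. Reducing $\psi_d$ block by block modulo $d=4k+1$ (using for instance $6k+1\equiv 2k$, $7k+1\equiv 3k$ and $8k+2\equiv 0$) should yield the two contiguous ranges
\[
\psi_d(\rc^1) = \{2k+1,2k+2,\ldots,4k-1\},\qquad \psi_d(\rc^2)=\{0,1,\ldots,2k\},
\]
which are visibly disjoint. The main obstacle is exactly this stage of the bookkeeping: several blocks of $\rc^1$ contribute overlapping even and odd residues that must be checked to exhaust the entire interval $[2k+1,4k-1]$ (with the singleton $3k$ from the two short blocks absorbed inside), and analogously for $\rc^2$ filling $[0,2k]$. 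Once these two explicit sets are in hand, disjointness is immediate and Corollary \ref{Cor:CiCjskew}(c) completes the proof that $\rc^0\cup\rc^1\cup\rc^2$ consists of $3d$ pairwise disjoint lines on $\Fd$.
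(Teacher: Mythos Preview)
Your proposal is correct and follows essentially the same approach as the paper: both verify each $\rc^s$ via Corollary~\ref{cor:Ciskew}, then handle $\rc^0\cup\rc^s$ by computing $\varphi_{d,\pm}(\rc^0)$ and comparing with the first indices of $\rc^1,\rc^2$, and finally establish $\psi_d(\rc^1)=\{2k+1,\ldots,4k-1\}$ and $\psi_d(\rc^2)=\{0,\ldots,2k\}$ to apply Corollary~\ref{Cor:CiCjskew}(c). Your block-by-block computations and the resulting sets all agree with the paper's tables.
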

\begin{proof} Let us devide the proof into four steps.
\\
[1mm]
{\sf Step 1: $\rc^0$  is constituted by $d$ skew lines.}

First of all, note that $\rc^0$ is defined by four strata below 
$$
\rc^0=\underbrace{
\Big\{ L^0_{1+i, 2k+i}\Big\}_{i=0}^k}_{({\rm i})}
\cup 
\underbrace{\Big\{L^0_{k+i,k+i}\Big\}_{i=2}^{k-1}}_{({\rm ii})}
\cup
\underbrace{\Big\{ L^0_{2k+i,1+i}\Big\}_{i=0}^k}_{({\rm iii})}
\cup 
\underbrace{\Big\{ L^0_{3k+i,3k+i}\Big\}_{i=1}^{k+1}}_{({\rm iv})},
$$ where the stratum (ii) is non-empty if and only if $k\geq 3$ (so, $d\neq 5$ and $d\neq 9$). Furthermore, we have that the label $t$ in each $L^0_{t,j}\in \rc^0$ is varying throughout the set 
\begin{equation}\label{eq:tC0}
\{\underbrace{1,\ldots,k+1}_{({\rm i})},\underbrace{k+2,\ldots,2k-1}_{({\rm ii})},\underbrace{2k,\ldots,3k}_{({\rm iii})},\underbrace{3k+1,\ldots,4k,4k+1\equiv_d 0}_{({\rm iv})}\}.
\end{equation} And the label $j$ throughout  the set
\begin{equation}\label{eq:jC0}
\{\underbrace{1,\ldots,k+1}_{({\rm iii})},\underbrace{k+2,\ldots,2k-1}_{({\rm ii})},\underbrace{2k,\ldots,3k}_{({\rm i})},\underbrace{3k+1,\ldots,4k,4k+1\equiv_d 0}_{({\rm iv})}\}.
\end{equation} Since the sets in (\ref{eq:tC0}) and (\ref{eq:jC0}) are equal to $R_d$, it follows that $\rc^0$ is constituted by $d$ skew lines. 
\\
[1mm]
{\sf Step 2: $\rc^0\cup \rc^s$  is constituted by skew lines for $s=1,2$.}

Next, we display the values of $\varphi_{d,\pm}$ over $\rc^0$ (using the stratification (i),\ldots,(iv) for $\rc^0$).

\begin{center}
\begin{tabular}{|c||c|c|c|c|c|c|c|} \hline
         $\rc^0$/ (i)    & $L^0_{1,2k}$ & $L^0_{2,2k+1}$  & $L^0_{3,2k+2}$ & $\cdots$ & $L^0_{k-1,3k-2}$  & $L^0_{k,3k-1}$ & $L^0_{1+k,3k}$\\ \hline \hline
          $\varphi_{d,-}$   & $2k-1$ & $2k-1$ & $2k-1$ & $\cdots$ & $2k-1$ & $2k-1$ & $2k-1$ \\ 
          \hline
          $\varphi_{d,+}$   & $2k+1$ & $2k+3$ & $2k+5$ & $\cdots$ & $4k-3$ & $4k-1$ & $4k+1\equiv_d 0$\\ \hline
        \end{tabular}
\end{center}

\begin{center}
\begin{tabular}{|c||c|c|c|c|c|c|c|} \hline
         $\rc^0$/ (ii)    & $L^0_{k+2,k+2}$ & $L^0_{k+3,k+3}$  & $L^0_{k+4,k+4}$ & $\cdots$ & $L^0_{2k-3,2k-3}$  & $L^0_{2k-2,2k-2}$ & $L^0_{2k-1,2k-1}$\\ \hline \hline
          $\varphi_{d,-}$   & $0$ & $0$ & $0$ & $\cdots$ & $0$ & $0$ & $0$ \\ 
          \hline
          $\varphi_{d,+}$   & $2k+4$ & $2k+6$ & $2k+8$ & $\cdots$ & $4k-6$ & $4k-4$ & $4k-2$\\ \hline
        \end{tabular}
\end{center}

\begin{center}
\begin{tabular}{|c||c|c|c|c|c|c|c|} \hline
         $\rc^0$/ (iii)    & $L^0_{2k,1}$ & $L^0_{2k+1,2}$  & $L^0_{2k+2,3}$ & $\cdots$ & $L^0_{3k-2,k-1}$  & $L^0_{3k-1,k}$ & $L^0_{3k,1+k}$\\ \hline \hline
          $\varphi_{d,-}$   & $2k+2$ & $2k+2$ & $2k+2$ & $\cdots$ & $2k+2$ & $2k+2$ & $2k+2$ \\ 
          \hline
          $\varphi_{d,+}$   & $2k+1$ & $2k+3$ & $2k+5$ & $\cdots$ & $4k-3$ & $4k-1$ & $0$\\ \hline
        \end{tabular}
\end{center}

\begin{center}
\begin{tabular}{|c||c|c|c|c|c|c|c|} \hline
         $\rc^0$/ (iv)    & $L^0_{3k+1,3k+1}$ & $L^0_{3k+2,3k+2}$  & $L^0_{3k+3,3k+3}$ & $\cdots$ & $L^0_{4k-1,4k-1}$  & $L^0_{4k,4k}$ & $L^0_{0,0}$\\ \hline \hline
          $\varphi_{d,-}$   & $0$ & $0$ & $0$ & $\cdots$ & $0$ & $0$ & $0$ \\ 
          \hline
          $\varphi_{d,+}$   & $2k+1$ & $2k+3$ & $2k+5$ & $\cdots$ & $4k-3$ & $4k-1$ & $0$\\ \hline
        \end{tabular}
\end{center}

Now, having in mind  Corollary \ref{Cor:CiCjskew} for the choice of $\rc^s\subset\L^s$ such that $\rc^0\cup\rc^s$ is constituted by skew lines for $s=1,2$ and the tables (involving $\rc^0$) above, it is necessary that 
\begin{equation}\label{eq:intCsLsknpar}
\left\{
\begin{array}{l}
\rc^1\cap\L^1_t=\emptyset\quad\forall\,\, t\in\{0,2k-1,2k+2\}\\
\vspace{-0.4cm}\\
\rc^2\cap\L^2_t=\emptyset\quad\forall\,\, t\in\{2k+j\}_{j=1}^{2k+1}\--\{2k+2,4k\}.
\end{array}
\right.
\end{equation}
Now, it is a straightforward verification to see that the label $t$ in each $L^s_{t,j}\in \rc^s$ belongs to the set  
$$\{1,2,2k+1,3k+1,3k+2\}\,\hbox{ for }\, s=1\,\hbox{ and }\, \{1,2,2k+2\} \,\hbox{ for }\, s=2.$$ 
Thus, using (\ref{eq:intCsLsknpar}) we concluded that $\rc^0\cup\rc^s$ is constituted by skew lines for $s=1,2$.
\\
[1mm]
{\sf Step 3: $\rc^s$  is constituted by $d$ skew lines for $s=1,2$.}

Let us stratify $\rc^1$ as follows: $\rc^1={\rm A}_1\,\dot\cup\,{\rm A}_2\,\dot\cup\,
{\rm A}_3\,\dot\cup\,
{\rm A}_4\,\dot\cup\,
{\rm A}_5\,\dot\cup\,
{\rm A}_6
$ where
\begin{equation}\label{eq:C1As}
 \begin{array}{c}
 {\rm A}_1 :=\Big\{ L^1_{2k+1,2k+i}\Big\}_{i=1}^{k-1},\,\, 
{\rm A}_2 :=
\Big\{ L^1_{3k+1,2k}\Big\},\,\,
{\rm A}_3 :=
 \Big\{ L^1_{1,k+i}\Big\}_{i=0}^{k-1},\\
 \vspace{-0.3cm}\\
 {\rm A}_4 :=
 \Big\{ L^1_{2k+1,i}\Big\}_{i=0}^{k-1},\,\,
 {\rm A}_5 := 
\Big\{ L^1_{3k+2,4k}\Big\},\,\,
{\rm A}_6 :=
 \Big\{ L^1_{2, 3k+i}\Big\}_{i=0}^{k-1} .
 \end{array}   
\end{equation}

Note that the label $j$ in each $L^1_{t,j}\in \rc^1$ is varying throughout the set 
\begin{equation}\label{eq:jC1}
\{\underbrace{0,\ldots,k-1}_{{\rm A}_4},\underbrace{k,\ldots,2k-1}_{{\rm A}_3},\underbrace{2k}_{{\rm A}_2},\underbrace{2k+1,\ldots,3k-1}_{{\rm A}_1},\underbrace{3k,\ldots,4k-1}_{{\rm A}_6},\underbrace{4k}_{{\rm A}_5} \},
\end{equation}
which is equal to $R_d$. Furthermore, $\varphi_{d,+}(\rc^1)$ is given by

\vspace{0.2cm}
\noindent
\begin{tabular}{|c||c|c|c|c|c|c|c|} \hline
         ${\rm A}_1 $   & $L^1_{2k+1,2k+1}$ & $L^1_{2k+1,2k+2}$  & $L^1_{2k+1,2k+3}$ & $\cdots$ & $L^1_{2k+1,3k-3}$  & $L^1_{2k+1,3k-2}$ & $L^1_{2k+1,3k-1}$\\ \hline \hline
          $\varphi_{d,+}$   & $1$ & $2$ & $3$ & $\cdots$ & $k-3$ & $k-2$ & $k-1$ \\\hline
        \end{tabular}

\begin{center}
\begin{tabular}{|c||c|} \hline
         ${\rm A}_2$    & $L^1_{3k+1,2k}$\\ \hline \hline
          $\varphi_{d,+}$   & $k$  \\
          \hline
        \end{tabular}
\end{center}

\begin{center}
\begin{tabular}{|c||c|c|c|c|c|c|c|} \hline
         ${\rm A}_3$    & $L^1_{1,k}$ & $L^1_{1,k+1}$  & $L^1_{1,k+2}$ & $\cdots$ & $L^1_{1,2k-3}$  & $L^1_{1,2k-2}$ & $L^1_{1,2k-1}$\\ \hline \hline
          $\varphi_{d,+}$   & $k+1$ & $k+2$ & $k+3$ & $\cdots$ & $2k-2$ & $2k-1$ & $2k$ \\\hline
        \end{tabular}
\end{center}

\begin{center}
\begin{tabular}{|c||c|c|c|c|c|c|c|} \hline
         ${\rm A}_4$    & $L^1_{2k+1,0}$ & $L^1_{2k+1,1}$  & $L^1_{2k+1,2}$ & $\cdots$ & $L^1_{2k+1,k-3}$  & $L^1_{2k+1,k-2}$ & $L^1_{2k+1,k-1}$\\ \hline \hline
          $\varphi_{d,+}$   & $2k+1$ & $2k+2$ & $2k+3$ & $\cdots$ & $3k-2$ & $3k-1$ & $3k$ \\\hline
        \end{tabular}
\end{center}

\begin{center}
\begin{tabular}{|c||c|} \hline
         ${\rm A}_5$    & $L^1_{3k+2,4k}$\\ \hline \hline
          $\varphi_{d,+}$   & $3k+1$  \\
          \hline
        \end{tabular}
\end{center}

\begin{center}
\begin{tabular}{|c||c|c|c|c|c|c|c|} \hline
         ${\rm A}_6$    & $L^1_{2,3k}$ & $L^1_{2,3k+1}$  & $L^1_{2,3k+2}$ & $\cdots$ & $L^1_{2,4k-3}$  & $L^1_{2,4k-2}$ & $L^1_{2,4k-1}$\\ \hline \hline
          $\varphi_{d,+}$   & $3k+2$ & $3k+3$ & $3k+4$ & $\cdots$ & $4k-1$ & $4k$ & $0$ \\\hline
        \end{tabular}
\end{center}

Thus, $\varphi_{d,+}(\rc^1)=R_d$. Taking into account the established facts, we may use Corollary \ref{cor:Ciskew} to conclude that $\rc^1$  is constituted by $d$ skew lines.

\vspace{0.2cm}

In a similar way, let us consider the following stratification for $\rc^2$: $\rc^2={\rm B}_1\,\dot\cup\,{\rm B}_2\,\dot\cup\,
{\rm B}_3\,\dot\cup\,
{\rm B}_4
$ where
\begin{equation}\label{eq:C2Bs}
\begin{array}{c}
{\rm B}_1 :=\Big\{ L^2_{1,i}\Big\}_{i=0}^{k-1},\,\, 
{\rm B}_2 :=
\Big\{ L^2_{2k+2,3k+i}\Big\}_{i=0}^k,\,\,
\\
\vspace{-0.3cm}\\
{\rm B}_3 :=
 \Big\{ L^2_{2,2k+i}\Big\}_{i=0}^{k-1},\,\,
{\rm B}_4 :=
 \Big\{ L^2_{2k+2,k+i}\Big\}_{i=0}^{k-1}.
 \end{array}
 \end{equation}

Note that the label $j$ in each $L^2_{t,j}\in \rc^2$ is varying throughout the set 
\begin{equation}\label{eq:jC2}
\{\underbrace{0,\ldots,k-1}_{{\rm B}_1},\underbrace{k,\ldots,2k-1}_{{\rm B}_4},\underbrace{2k,\ldots,3k-1}_{{\rm B}_3},\underbrace{3k,\ldots,4k}_{{\rm B}_2}\},
\end{equation}
which is equal to $R_d$. Furthermore, $\varphi_{d,+}(\rc^2)$ is given by

\begin{center}
\begin{tabular}{|c||c|c|c|c|c|c|c|} \hline
         ${\rm B}_1$    & $L^2_{1,0}$ & $L^2_{1,1}$  & $L^2_{1,2}$ & $\cdots$ & $L^2_{1,k-3}$  & $L^2_{1,k-2}$ & $L^2_{1,k-1}$\\ \hline \hline
          $\varphi_{d,+}$   & $1$ & $2$ & $3$ & $\cdots$ & $k-2$ & $k-1$ & $k$ \\\hline
        \end{tabular}
\end{center}

\begin{center}
\begin{tabular}{|c||c|c|c|c|c|c|c|} \hline
         ${\rm B}_2$    & $L^2_{2k+2,3k}$ & $L^2_{2k+2,3k+1}$  & $L^2_{2k+2,3k+2}$ & $\cdots$ & $L^2_{2k+2,4k-2}$  & $L^2_{2k+2,4k-1}$ & $L^2_{2k+2,4k}$\\ \hline \hline
          $\varphi_{d,+}$   & $k+1$ & $k+2$ & $k+3$ & $\cdots$ & $2k-1$ & $2k$ & $2k+1$ \\\hline
        \end{tabular}
\end{center}

\begin{center}
\begin{tabular}{|c||c|c|c|c|c|c|c|} \hline
         ${\rm B}_3$    & $L^2_{2,2k}$ & $L^2_{2,2k+1}$  & $L^2_{2,2k+2}$ & $\cdots$ & $L^2_{2,3k-3}$  & $L^2_{2,3k-2}$ & $L^2_{2,3k-1}$\\ \hline \hline
          $\varphi_{d,+}$   & $2k+2$ & $2k+3$ & $2k+4$ & $\cdots$ & $3k-1$ & $3k$ & $3k+1$ \\\hline
        \end{tabular}
\end{center}

\begin{center}
\begin{tabular}{|c||c|c|c|c|c|c|c|} \hline
         ${\rm B}_4$    & $L^2_{2k+2,k}$ & $L^2_{2k+2,k+1}$  & $L^2_{2k+2,k+2}$ & $\cdots$ & $L^2_{2k+2,2k-3}$  & $L^2_{2k+2,2k-2}$ & $L^2_{2k+2,2k-1}$\\ \hline \hline
          $\varphi_{d,+}$   & $3k+2$ & $3k+3$ & $3k+4$ & $\cdots$ & $4k-1$ & $4k$ & $0$ \\\hline
        \end{tabular}
\end{center}

Thus, $\varphi_{d,+}(\rc^2)=R_d$. Again, using  Corollary \ref{cor:Ciskew}, we concluded that $\rc^2$  is constituted by $d$ skew lines.
\\
[1mm]
{\sf Step 4: $\rc^1\cup \rc^2$  is constituted by $2d$ skew lines.}

Having in mind  (c) in Corollary \ref{Cor:CiCjskew}, it is enough to prove that $\psi_d(\rc^1)\,\cap\, \psi_d(\rc^2)=\emptyset$. So, we will use again the stratification for $\rc^1$ in (\ref{eq:C1As}) and $\rc^2$ in (\ref{eq:C2Bs})  to display the computation of $\psi_d(\rc^1)$ and $\psi_d(\rc^2)$ bellow:

\begin{center}
\noindent
\begin{tabular}{|c||c|c|c|c|c|c|c|} \hline
         ${\rm A}_1 $   & $L^1_{2k+1,2k+1}$ & $L^1_{2k+1,2k+2}$  & $L^1_{2k+1,2k+3}$ & $\cdots$ & $L^1_{2k+1,3k-3}$  & $L^1_{2k+1,3k-2}$ & $L^1_{2k+1,3k-1}$\\ \hline \hline
          $\psi_{d}$   & $2k+2$ & $2k+4$ & $2k+6$ & $\cdots$ & $4k-6$ & $4k-4$ & $4k-2$ \\\hline
        \end{tabular}
\end{center}

\begin{center}
\begin{tabular}{|c||c|} \hline
         ${\rm A}_2$    & $L^1_{3k+1,2k}$\\ \hline \hline
          $\psi_{d}$   & $3k$  \\
          \hline
        \end{tabular}
\end{center}

\begin{center}
\begin{tabular}{|c||c|c|c|c|c|c|c|} \hline
         ${\rm A}_3$    & $L^1_{1,k}$ & $L^1_{1,k+1}$  & $L^1_{1,k+2}$ & $\cdots$ & $L^1_{1,2k-3}$  & $L^1_{1,2k-2}$ & $L^1_{1,2k-1}$\\ \hline \hline
          $\psi_{d}$   & $2k+1$ & $2k+3$ & $2k+5$ & $\cdots$ & $4k-5$ & $4k-3$ & $4k-1$ \\\hline
        \end{tabular}
\end{center}

\begin{center}
\begin{tabular}{|c||c|c|c|c|c|c|c|} \hline
         ${\rm A}_4$    & $L^1_{2k+1,0}$ & $L^1_{2k+1,1}$  & $L^1_{2k+1,2}$ & $\cdots$ & $L^1_{2k+1,k-3}$  & $L^1_{2k+1,k-2}$ & $L^1_{2k+1,k-1}$\\ \hline \hline
          $\psi_{d}$   & $2k+1$ & $2k+3$ & $2k+5$ & $\cdots$ & $4k-5$ & $4k-3$ & $4k-1$ \\\hline
        \end{tabular}
\end{center}

\begin{center}
\begin{tabular}{|c||c|} \hline
         ${\rm A}_5$    & $L^1_{3k+2,4k}$\\ \hline \hline
          $\psi_{d}$   & $3k$  \\
          \hline
        \end{tabular}
\end{center}

\begin{center}
\begin{tabular}{|c||c|c|c|c|c|c|c|} \hline
         ${\rm A}_6$    & $L^1_{2,3k}$ & $L^1_{2,3k+1}$  & $L^1_{2,3k+2}$ & $\cdots$ & $L^1_{2,4k-3}$  & $L^1_{2,4k-2}$ & $L^1_{2,4k-1}$\\ \hline \hline
          $\psi_{d}$   & $2k+1$ & $2k+3$ & $2k+5$ & $\cdots$ & $4k-5$ & $4k-3$ & $4k-1$ \\\hline
        \end{tabular}
\end{center}

So,
\begin{equation}\label{eq:psiC1npar}
\psi_d(\rc^1)=\{2k+1,2k+2,\ldots, 4k-2,4k-1\}.    
\end{equation}

\begin{center}
\begin{tabular}{|c||c|c|c|c|c|c|c|} \hline
         ${\rm B}_1$    & $L^2_{1,0}$ & $L^2_{1,1}$  & $L^2_{1,2}$ & $\cdots$ & $L^2_{1,k-3}$  & $L^2_{1,k-2}$ & $L^2_{1,k-1}$\\ \hline \hline
          $\psi_{d}$   & $1$ & $3$ & $5$ & $\cdots$ & $2k-5$ & $2k-3$ & $2k-1$ \\\hline
        \end{tabular}
\end{center}

\begin{center}
\begin{tabular}{|c||c|c|c|c|c|c|c|} \hline
         ${\rm B}_2$    & $L^2_{2k+2,3k}$ & $L^2_{2k+2,3k+1}$  & $L^2_{2k+2,3k+2}$ & $\cdots$ & $L^2_{2k+2,4k-2}$  & $L^2_{2k+2,4k-1}$ & $L^2_{2k+2,4k}$\\ \hline \hline
          $\psi_{d}$   & $0$ & $2$ & $4$ & $\cdots$ & $2k-4$ & $2k-2$ & $2k$ \\\hline
        \end{tabular}
\end{center}

\begin{center}
\begin{tabular}{|c||c|c|c|c|c|c|c|} \hline
         ${\rm B}_3$    & $L^2_{2,2k}$ & $L^2_{2,2k+1}$  & $L^2_{2,2k+2}$ & $\cdots$ & $L^2_{2,3k-3}$  & $L^2_{2,3k-2}$ & $L^2_{2,3k-1}$\\ \hline \hline
          $\psi_{d}$   & $1$ & $3$ & $5$ & $\cdots$ & $2k-5$ & $2k-3$ & $2k-1$ \\\hline
        \end{tabular}
\end{center}

\begin{center}
\begin{tabular}{|c||c|c|c|c|c|c|c|} \hline
         ${\rm B}_4$    & $L^2_{2k+2,k}$ & $L^2_{2k+2,k+1}$  & $L^2_{2k+2,k+2}$ & $\cdots$ & $L^2_{2k+2,2k-3}$  & $L^2_{2k+2,2k-2}$ & $L^2_{2k+2,2k-1}$\\ \hline \hline
          $\psi_{d}$   & $1$ & $3$ & $5$ & $\cdots$ & $2k-5$ & $2k-3$ & $2k-1$ \\\hline
        \end{tabular}
\end{center}
Therefore,
\begin{equation}\label{eq:psiC2npar}
\psi_d(\rc^2)=\{0,1,2,\ldots, 2k-1,2k\}.   \end{equation}
Thus, from (\ref{eq:psiC1npar}) and \ref{eq:psiC2npar}) we have that $\psi_d(\rc^1)\,\cap\, \psi_d(\rc^2)=\emptyset$.
\end{proof}

\begin{proposition}\label{Prop:dimparnimpar} Let $d=2n+1$ with $n=2k+ 1$ and $k\geq 3$. Consider the families 
 $$
	\rc^0= \big\{ L^0_{2k+1+i, 0}\big\}_{i=0}^{k+1}
\cup 
\big\{L^0_{k+1+i,3k+i}\big\}_{i=1}^{k+2}
\cup
\big\{ L^0_{2+i,2k-1+i}\big\}_{i=0}^{k+1} \cup\big\{ L^0_{k+4+i,k+2+i}\big\}_{i=0}^{k-4}, $$ $$
\rc^1= \big\{ L^1_{2k+1,2k+i}\big\}_{i=1}^{k-1}
\cup 
\big\{ L^1_{3k+1,2k}\big\}
\cup 
 \big\{ L^1_{1,k+i}\big\}_{i=0}^{k-1}
 \cup
 \big\{ L^1_{2k+1,i}\big\}_{i=0}^{k-1}  \cup\big\{ L^1_{3k+2,4k}\big\}
\cup
 \big\{ L^1_{2, 3k+i}\big\}_{i=0}^{k-1}, $$ $$
 \rc^2=\ \big\{ L^2_{3,i}\big\}_{i=0}^{k}
\cup 
\big\{ L^2_{2k+4,k+1+i}\big\}_{i=0}^{k}
\cup 
 \big\{ L^2_{2,2k+2+i}\big\}_{i=0}^{k}
 \cup
 \big\{ L^2_{2k+4,3k+3+i}\big\}_{i=0}^{k-1}.
$$
It is verified that $\rc^0\cup\rc^1\cup\rc^2$ consists of $3d$ skew lines in ${\rm F}_d$.
\end{proposition}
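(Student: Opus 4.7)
The plan is to mirror the four-step scheme used in the proof of Proposition~\ref{Prop:dimparnpar}, now with $d = 2n+1 = 4k+3$ in place of $4k+1$. First I would verify that each $\rc^s$ is a set of $d$ skew lines: for $\rc^0$ one retains the displayed partition into four strata, checks that both the first and the second coordinates of the labeling pairs exhaust $R_d$, and reads off a matching permutation $\sigma$, so Corollary~\ref{cor:Ciskew}(a) applies. For $\rc^1$ and $\rc^2$ the analogous stratification shows that the second coordinates exhaust $R_d$ and that $\varphi_{d,+}$ is injective on the displayed pairs, whence Corollary~\ref{cor:Ciskew}(b),(c) applies. In practice this amounts to listing the arithmetic progression produced by each stratum and confirming that the strata cover $R_d$ exactly once.

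Next I would check that $\rc^0 \cup \rc^s$ is skew for $s=1,2$ by tabulating $\varphi_{d,-}(\rc^0)$ and $\varphi_{d,+}(\rc^0)$ stratum-by-stratum; the images land in a short subset of $R_d$, which by Corollary~\ref{Cor:CiCjskew}(a),(b) gives the list of forbidden first indices for $\rc^1$ and $\rc^2$, respectively. The stratification of each $\rc^s$ is designed so that the few distinct first indices appearing in the family avoid these forbidden sets, which is confirmed by inspection. Finally, to show $\rc^1 \cup \rc^2$ is skew, I would compute $\psi_d$ on each stratum of $\rc^1$ and $\rc^2$; the shifts are arranged so that $\psi_d(\rc^1)$ and $\psi_d(\rc^2)$ land in disjoint (indeed complementary) subsets of $R_d$, at which point Corollary~\ref{Cor:CiCjskew}(c) closes the argument.

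The principal obstacle is purely combinatorial bookkeeping. Because $d = 4k+3$ rather than $4k+1$, the shifts $3k$, $2k-1$, $k+2$ and the block lengths appearing in each stratum interact slightly differently modulo $d$, and the fourth piece $\{L^0_{k+4+i,k+2+i}\}_{i=0}^{k-4}$ of $\rc^0$ is empty unless $k\geq 4$, so the border case $k=3$ (i.e.\ $d=15$) needs its own sanity check. One must also confirm that $\varphi_{d,+}$ remains injective on each $\rc^s$ and that no forbidden first index is accidentally reused across strata---elementary but unforgiving arithmetic. No conceptual ingredients beyond those already used in Proposition~\ref{Prop:dimparnpar} are needed; the verification is a longer but fully parallel case analysis.
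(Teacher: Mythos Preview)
Your proposal is correct and is precisely the approach the paper takes: its proof of Proposition~\ref{Prop:dimparnimpar} consists of the single sentence ``It is analogous to the proof presented in the Proposition~\ref{Prop:dimparnpar},'' i.e.\ exactly the four-step verification (each $\rc^s$ skew via Corollary~\ref{cor:Ciskew}, then $\rc^0\cup\rc^s$ and $\rc^1\cup\rc^2$ skew via Corollary~\ref{Cor:CiCjskew}) that you outline. Your remark about the border case $k=3$ and the empty fourth stratum of $\rc^0$ is apt and goes slightly beyond what the paper makes explicit.
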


\begin{proof} It is analogous to the proof presented in the Proposition~\ref{Prop:dimparnpar}. \end{proof}

It follows from the previous Propositions~\ref{Prop:dimparnpar}, \ref{Prop:dimparnimpar} and the Examples~\ref{ex:d=7},~\ref{ex:d9ed11} that

\begin{corollary}Assume $d\geq 7$ odd. Then ${\frak s}(F_d)=3d$.   
\end{corollary}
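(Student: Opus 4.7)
The plan is to split the claim ${\frak s}(\Fd)=3d$ into the two inequalities. The upper bound ${\frak s}(\Fd)\leq 3d$ for every $d\geq 3$ is already in Corollary~\ref{Co:r(L)}, so nothing new is needed there. The entire task reduces to exhibiting, for each odd $d\geq 7$, an explicit family of $3d$ pairwise disjoint lines on $\Fd$.

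To produce such a family, I would partition the odd integers $d\geq 7$ according to which of the previously-established constructions applies. Note that Propositions~\ref{Prop:dimparnpar} and~\ref{Prop:dimparnimpar} both assume $k\geq 3$ in the decomposition $d=2n+1$ with $n=2k$ or $n=2k+1$, respectively; this corresponds to $d\geq 13$ in both cases. Consequently the small odd values $d\in\{7,9,11\}$ escape both propositions and must be handled separately, but Examples~\ref{ex:d=7} and~\ref{ex:d9ed11} already display families $\rc^0\cup\rc^1\cup\rc^2$ of $3d$ skew lines for precisely these three degrees.

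For $d\geq 13$ with $d\equiv_4 1$, I would write $d=4k+1$ with $k\geq 3$, so that $d=2n+1$ with $n=2k$; then Proposition~\ref{Prop:dimparnpar} furnishes families $\rc^0,\rc^1,\rc^2$ whose union is a set of $3d$ pairwise disjoint lines on $\Fd$. Analogously, for $d\geq 15$ with $d\equiv_4 3$, I would write $d=4k+3$ with $k\geq 3$, so that $n=2k+1$, and invoke Proposition~\ref{Prop:dimparnimpar}. The case $d=13$ and $d\equiv_4 3$ with $d\geq 15$ together cover every odd integer $d\geq 13$, so combined with the three small cases handled by the examples, every odd $d\geq 7$ is dealt with. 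Hence ${\frak s}(\Fd)\geq 3d$, and together with the upper bound this yields the equality.

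The novelty of the corollary is essentially bookkeeping: the substantive work lies in the two propositions and the three worked examples, both verifying that the listed collections $\rc^s$ consist of skew lines (via (a)--(c) of Corollary~\ref{cor:Ciskew}) and that $\rc^0\cup\rc^s$ and $\rc^1\cup\rc^2$ also consist of skew lines (via Corollary~\ref{Cor:CiCjskew}, in particular $\psi_d(\rc^1)\cap\psi_d(\rc^2)=\emptyset$). The only check required at the level of the corollary is the arithmetic observation that $\{7,9,11\}$ is exactly the set of odd integers $\geq 7$ falling outside the ranges $d=4k+1,\,k\geq 3$ and $d=4k+3,\,k\geq 3$; this is the main (and entirely trivial) obstacle.
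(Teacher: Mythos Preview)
Your proposal is correct and follows exactly the paper's approach: the corollary is deduced directly from Propositions~\ref{Prop:dimparnpar} and~\ref{Prop:dimparnimpar} (covering odd $d\geq 13$ according to $d\equiv_4 1$ or $d\equiv_4 3$) together with Examples~\ref{ex:d=7} and~\ref{ex:d9ed11} (covering $d\in\{7,9,11\}$), combined with the upper bound from Corollary~\ref{Co:r(L)}. Your write-up merely makes explicit the trivial case-splitting that the paper leaves implicit.
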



\begin{theorem}\label{Teo} Let $\Fd$ be the Fermat surface of degree $d\geq 3$. If $\frak{s} (\Fd)$ is the maximal number of skew lines in $\Fd$, then $\frak{s}(\Fd)=3d$ for all $d\not=3,5.$ Being $\frak{s}(\textrm{F}_3)=6$ and  $ {\frak s}({\rm F}_5)=13.$\end{theorem}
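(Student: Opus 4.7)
The plan is simply to assemble the building blocks already prepared throughout the paper, matching an upper bound with a lower bound in each range of $d$. The upper bound is uniform and has already been delivered: Corollary~\ref{Co:r(L)} gives ${\frak s}(\Fd)\leq {\frak s}(\L^0)+{\frak s}(\L^1)+{\frak s}(\L^2)=3d$ for every $d\geq 3$. Consequently, the entire task reduces to proving the two sharp equalities ${\frak s}({\rm F}_3)=6$ and ${\frak s}({\rm F}_5)=13$, and to producing, for every remaining $d$, an explicit family of $3d$ pairwise disjoint lines in $\Fd$.

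For $d\geq 4$ even, I would invoke Proposition~\ref{Prop:dpar}, which exhibits the family $\rc^0\cup\rc^1\cup\rc^2$ (with $\rc^0=\{L^0_{a,a}\}$ and $\rc^s=\{L^s_{1,i}\}$) of $3d$ skew lines, matching the upper bound $3d$. For $d\geq 7$ odd, the construction splits according to the parity of $n=(d-1)/2$: when $d\equiv_4 1$ (so $n=2k$), Proposition~\ref{Prop:dimparnpar} furnishes $3d$ skew lines whenever $k\geq 3$ (that is, $d\geq 13$), and the remaining case $d=9$ is covered directly by Example~\ref{ex:d9ed11}; when $d\equiv_4 3$ (so $n=2k+1$), Proposition~\ref{Prop:dimparnimpar} gives the construction for $k\geq 3$ (that is, $d\geq 15$), and the small cases $d=7$ and $d=11$ are supplied by Examples~\ref{ex:d=7} and~\ref{ex:d9ed11} respectively. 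In every one of these cases the lower bound $3d$ meets the upper bound $3d$ from Corollary~\ref{Co:r(L)}, yielding ${\frak s}(\Fd)=3d$.

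For the two exceptional values, I would quote the explicit calculations done in Section~\ref{Secd35}. The equality ${\frak s}({\rm F}_3)=6$ follows from the corollary to Proposition~\ref{Prop:d=3}: assuming $\#\rc\geq 7$ forces all three $\rc^i$ to be non-empty of sizes $\geq 3$, contradicting Proposition~\ref{Prop:d=3}, so ${\frak s}({\rm F}_3)\leq 6$; the matching lower bound $6=2d$ is provided by Corollary~\ref{Co:lbound2d}. The equality ${\frak s}({\rm F}_5)=13$ is exactly Theorem~\ref{Teod=5}, whose proof rules out $\#\rc\geq 14$ by combining Lemmas~\ref{Lemma01d5},~\ref{lemma02d5},~\ref{lemma03d5} with the exclusion principles in Remarks~\ref{Obs:phi=rd}, while Example~\ref{exampled=5} supplies $13$ explicit skew lines.

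The main obstacle in this final theorem is therefore only bookkeeping: verifying that the case split $\{d=3\}$, $\{d=5\}$, $\{d\geq 4\text{ even}\}$, $\{d=7,9,11\}$, $\{d\geq 13,\ d\equiv_4 1\}$, $\{d\geq 15,\ d\equiv_4 3\}$ truly covers every $d\geq 3$, and that each previously cited proposition is applied in its correct range of validity. With that checked, the theorem is the direct combination of Corollaries~\ref{Co:r(L)} and to Proposition~\ref{Prop:d=3}, Theorem~\ref{Teod=5}, Proposition~\ref{Prop:dpar}, Propositions~\ref{Prop:dimparnpar}--\ref{Prop:dimparnimpar}, and Examples~\ref{ex:d=7},~\ref{ex:d9ed11}.
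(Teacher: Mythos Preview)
Your proposal is correct and follows exactly the paper's approach: the paper's own proof of Theorem~\ref{Teo} is nothing more than a pointer to Section~\ref{Secd35} for $d\in\{3,5\}$ and to the results of Section~\ref{Secdgeq4} (Proposition~\ref{Prop:dpar}, Propositions~\ref{Prop:dimparnpar}--\ref{Prop:dimparnimpar}, and Examples~\ref{ex:d=7},~\ref{ex:d9ed11}) for all other $d$, which is precisely the case split you spell out. One tiny slip: in your sketch for $d=3$ you write that $\#\rc\geq 7$ forces all three $\rc^i$ to have size $\geq 3$, whereas it only forces all three to be non-empty and at least one to have size $3$; this does not affect the argument since you are (correctly) invoking the corollary to Proposition~\ref{Prop:d=3}.
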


\begin{proof} For $d\in \{3,5\}$ see Section~\ref{Secd35}. For the other cases, see the previous results in this section.\end{proof}


\vspace{0.3cm}

\begin{center}
    
\footnotesize{\begin{tabular}{l l l }
  Sally ANDRIA & Jacqueline ROJAS & Wállace MANGUEIRA\\
  Dep. of Geometry, UFF & Dep. of Mathematics, UFPB & Dep. of Mathematics, UFPB\\
   sally\_andria@id.uff.br & jfra@academico.ufpb.br & wallace.mangueira@academico.ufpb.br 
\end{tabular}}

\end{center}

\end{document}